\documentclass[12pt,reqno]{amsart}
\usepackage[latin1]{}
\usepackage{graphicx}
\usepackage{tikz-cd}
\usepackage{graphicx}
\usepackage{caption}
\usepackage{tabularx}
\usepackage{longtable}
\usepackage{tabularray}
\usepackage{amsmath, amssymb}
\usepackage{array}
\usepackage[english]{babel}  
\usepackage{amsmath}
\usepackage{longtable}
\usepackage{mathtools}
\usepackage{multicol}
\usetikzlibrary{positioning}
\usepackage{rotating}
\usepackage{verbatim} 

\usepackage{graphicx}
\usepackage{tikz-cd}
\usepackage{caption}

\usepackage{amssymb,tikz}
\usetikzlibrary{positioning}
\usepackage[colorlinks=false,urlbordercolor=white]{hyperref}

\usetikzlibrary{positioning,shapes.multipart,arrows.meta}

\newcommand{\comentario}[1]{}
\makeatletter
\newtheorem*{rep@theorem}{\rep@title}
\newcommand{\newreptheorem}[2]{%
\newenvironment{rep#1}[1]{%
 \def\rep@title{#2 \ref{##1}}%
 \begin{rep@theorem}}%
 {\end{rep@theorem}}}
\makeatother

\def\gn#1#2{{$\href{http://groupnames.org/\#?#1}{#2}$}}
\def\gn#1#2{$#2$}  
\tikzset{sgplattice/.style={inner sep=1pt,norm/.style={red!50!blue},char/.style={blue!50!black},
  lin/.style={black!50}},cnj/.style={black!50,yshift=-2.5pt,left=-1pt of #1,scale=0.5,fill=white}}

\newreptheorem{theorem}{Theorem}
\newreptheorem{lemma}{Lemma}

\newtheorem{theorem}{Theorem}[section]

\newtheorem{proposition}[theorem]{Proposition}
\newtheorem{corollary}[theorem]{Corollary}

\newtheorem{lemma}[theorem]{Lemma}

\newcommand{\Q}{\mathbb Q}
\newcommand{\Qbar}{{\overline{\mathbb Q}}}

\newcommand{\F}{\mathbb F}

\newcommand{\GL}{\mathrm{GL}}
\newcommand{\PGL}{\mathrm{PGL}}

\newcommand{\Aut}{\operatorname{Aut}}

\newcommand{\Gal}{\operatorname{Gal}}

\theoremstyle{definition}

\begin{document}
    
\title{
On Galois Embedding Problems Arising from 3-Torsion of Elliptic Curves
}

\author[Gálvez]{José-A. Gálvez}
\address[Gálvez]{Universitat Politècnica de Catalunya}
\author[Lario]{Joan-C. Lario}
\address[Lario]{Universitat Politècnica de Catalunya}


\date{\today. The first author  gratefully acknowledges the Universitat Politècnica de Catalunya and Banco Santander for the financial support provided through the predoctoral FPI-UPC grant 2025\_FPI-UPC\_279750.\\
The second author is funded by the project PID2022-136944NB-IOO (MECD)
}


\begin{abstract}
We study Galois embedding problems arising from the 3-torsion of elliptic curves defined over $\mathbb{Q}$, extending Lario and Rio \cite{LR}
correspondence to all possible images of mod~$3$ Galois representations; namely, 
$\GL_2(\F_3),SD_{16},D_6,D_4$ and $C_2^2$.
In the cyclotomic case, we show that solvability of these embedding problems is equivalent to the existence of infinitely many elliptic curves whose \(3\)-division fields provide the corresponding solutions.

\end{abstract}

\flushbottom
\maketitle
\tableofcontents
\thispagestyle{empty}

\section{Introduction} 
In this article, we develop a unified approach to Galois embedding problems arising from the 3-torsion of elliptic curves defined over $\mathbb{Q}$.
This relationship was first studied in \cite{LR} for the exhaustive case of the mod 3 Galois representation. We propose a generalization that also works for non-exhaustive cases.

Let $E/\mathbb{Q}$ be an elliptic curve given by a short Weierstrass equation, and let $\psi_{3,E}$ denote its 3-division polynomial.  We have that $\psi_{3,E}$ is a four degree polynomial with discriminant $\Delta_{\psi_{3,E}} \equiv -3 \pmod{\Q^{*2}}$. The splitting field of this polynomial, denoted by $\Q_x(E[3])$, is the smallest field containing the $x$-coordinates of all the 3-torsion points of $E$. The 3-division field of $E$, denoted by $\Q(E[3])$, is the smallest extension of $\Q_x(E[3])$ that also contains the $y$-coordinates of the 3-torsion points of~$E$. When the Galois group of $\psi_{3,E}$ is isomorphic to the symmetric group $S_4$, the  mod 3 Galois representation $\varrho_{3,E} \colon \Gal(\Qbar/\Q) \to \GL_2(\F_3)$ respect to some basis of $E[3]$ is a solution of the Galois embedding problem 
\begin{center}
\begin{tikzcd}
& & & \Gal(\Qbar/\Q)
\arrow[->>,d]
\arrow[->>,ldd,dotted,"\varrho"'] & \\
& & & \Gal(\Q_x(E[3])/\Q) \arrow{d}{\varphi} & \\
1 \arrow{r} & \{\pm 1\} \arrow{r} & \GL_2(\F_3)
\arrow{r}{\pi} & \PGL_2(\F_3) \cong S_4 \arrow{r} & 1,\\
\end{tikzcd}
\end{center}
where $\pi \colon \GL_2(\F_3) \to \PGL_2(\F_3)$ is the natural projection and $\varphi \colon \Gal(\Q_x(E[3])/\Q) \to \PGL_2(\F_3)$ is a group isomorphism\footnote{Note that a different choice of $\varphi$ gives an equivalent embedding problem since $\Aut(S_4) = \mathrm{Inn}(S_4)$.}. 

The study of this Galois embedding problem for an arbitrary field $K$ arising as the splitting field of a quartic polynomial $f$ with $\Delta_f \equiv -3 \pmod{\Q^{*2}}$ leads to a geometric equivalence:
\begin{theorem}[Lario and Rio, 1995]\label{th_LR}
    Let $K$ be the splitting field of a four-degree polynomial $f$ over~$\Q$ with $\Delta_f \equiv -3 \pmod{\Q^{*2}}$. Suppose that $\Gal(K/\Q) \cong \PGL_2(\F_3)$. Then the Galois embedding problem 
    \begin{center}
\begin{tikzcd}
& & & \Gal(\Qbar/\Q)
\arrow[->>,d]
\arrow[->>,ldd,dotted,"\varrho"'] & \\
& & & \Gal(K/\Q) \arrow{d}{\varphi} & \\
1 \arrow{r} & \{\pm 1\} \arrow{r} & \GL_2(\F_3)
\arrow{r}{\pi} & \PGL_2(\F_3) \cong S_4 \arrow{r} & 1,
\end{tikzcd}
\end{center}
is solvable if and only if $K$ is the splitting field of the 3-division polynomial of an elliptic curve $E/\Q$.
\end{theorem}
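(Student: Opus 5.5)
The forward direction (\emph{if $K$ is a $3$-division field then the problem is solvable}) is immediate from the discussion preceding the statement. Suppose $K=\Q_x(E[3])$ with $\Gal(K/\Q)\cong S_4$. Then $\pi\circ\varrho_{3,E}$ is surjective onto $\PGL_2(\F_3)$. Since no proper subgroup of $\GL_2(\F_3)$ surjects onto $\PGL_2(\F_3)$ (any such subgroup has index at most $2$ and avoids $-1$, but the $2$-Sylow $Q_8\cup\dots$ structure forces it to contain $-1$), $\varrho_{3,E}$ itself is surjective. It therefore solves the embedding problem for \emph{some} isomorphism $\varphi$. Because $\Aut(S_4)=\mathrm{Inn}(S_4)$, changing $\varphi$ amounts to conjugating $\varrho$ by a lift in $\GL_2(\F_3)$, so it solves the problem for every $\varphi$.

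For the converse I would proceed in three steps.

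\textbf{Step 1: the determinant is forced.} Let $\varrho$ be a solution. The map $\det\colon\GL_2(\F_3)\to\F_3^*=\{\pm1\}$ is trivial on the scalar $-1$, so it factors through $\PGL_2(\F_3)\cong S_4$. There it equals the sign character, since $\PSL_2(\F_3)\cong A_4$ is the unique index-$2$ subgroup. Hence $\det\varrho$ is the quadratic character of $\Q(\sqrt{\Delta_f})=\Q(\sqrt{-3})$, that is, the mod $3$ cyclotomic character $\chi_3$. Thus $\varrho$ is a surjective representation $\Gal(\Qbar/\Q)\to\GL_2(\F_3)$ with cyclotomic determinant. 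Equivalently, $\F_3^2$ with this action is a Galois module $M$ carrying a Galois-equivariant alternating pairing $M\times M\to\mu_3$.

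\textbf{Step 2: parametrize elliptic curves with $E[3]\cong M$.} Consider the twist $X_M(3)$ of the modular curve $X(3)$ by the cocycle defined by $M$. Its noncuspidal points are the pairs $(E,\iota)$ with $\iota\colon E[3]\xrightarrow{\sim}M$ a symplectic isomorphism. Since $X(3)\cong\mathbb P^1$, the twist $X_M(3)$ is a genus-$0$ curve over $\Q$, hence a conic. If it has a rational point, then it is isomorphic to $\mathbb P^1$ and has infinitely many rational noncuspidal points. Each such point gives $E/\Q$ with $E[3]\cong M$, so $\Q_x(E[3])$ is the fixed field of $\ker(\pi\circ\varrho)$, namely $K$.

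\textbf{Step 3: show the conic has a rational point.} This is the step I expect to be the main obstacle. One option is the Rubin--Silverberg route: identify the class of $X_M(3)$ in $\mathrm{Br}_2(\Q)$ and show it vanishes. If that is not available, I would work concretely instead. The $3$-division polynomial of $y^2=x^3+ax+b$ is $3x^4+6ax^2+12bx-a^2$. I would ask which Tschirnhausen transforms of $f$ can be brought to this shape. Scaling $\theta\mapsto u\theta+v$ on a generic Tschirnhausen transform $\theta=\sum c_i\alpha^i$ shows that the shape condition (vanishing $x^3$-coefficient, and the constant term equal to $-\tfrac{1}{4}$ times the square of the $x^2$-coefficient up to normalization) cuts out a quadric. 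I would show that this quadric is isomorphic to the conic $X_M(3)$, and that its Hasse invariant equals the Serre obstruction to the embedding problem. Serre's formula for the double cover $\GL_2(\F_3)\to S_4$ (the cover in which transpositions lift to involutions) expresses this obstruction as $w_2(\mathrm{Tr}_f)$ plus a Hilbert symbol in $\Delta_f=-3$ and a small constant. The required identity then amounts to a comparison of Hilbert symbols. Solvability kills the obstruction, so the conic has a rational point, and Step 2 finishes the proof.
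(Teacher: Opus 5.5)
Your forward direction and Steps~1--2 are sound. For surjectivity of $\varrho_{3,E}$, the clean reason is that $\mathrm{SL}_2(\F_3)$ is the only index-$2$ subgroup of $\GL_2(\F_3)$, and it contains $-I$.

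\textbf{The gap is Step~3.} It carries the entire weight of the converse and is only announced. You assert, without argument, that the Tschirnhaus locus is a conic isomorphic to $X_M(3)$ and that its Hasse invariant equals Serre's obstruction. That comparison in $\mathrm{Br}_2(\Q)$ is, as far as one can tell, the substance of the Lario--Rio result being quoted, so naming it is not proving it. The paper does not reprove this theorem. For the other groups it does what you gesture at: explicit families $E_{t,G}$, Tschirnhaus transforms of $f$ matched against $\psi_{3,E_t}$, and Hilbert-symbol comparisons with known obstruction formulas.

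Your concrete description is also inaccurate. After normalizing $\mathrm{Tr}\,\theta=0$, you ask the characteristic polynomial of $\theta\in\Q(\alpha)$ to have the shape $x^4+2ax^2+4bx-a^2/3$ of $\psi_{3,E}/3$. That is the condition $e_2(\theta)^2+12N(\theta)=0$, a homogeneous \emph{quartic} on the projective plane of trace-zero elements, not a quadric. Over $\Qbar$ it factors into two conics, each defined over $\Q$ exactly because $\Delta_f\equiv-3$. These should correspond to symplectic versus antisymplectic identifications $E[3]\simeq M$, i.e.\ to $X_M(3)$ and $X_{M^-}(3)$, where $M^-$ is $M$ with the pairing negated. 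So ``the quadric is $X_M(3)$'' also needs adjusting.

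\textbf{A route that closes the gap.} Within your own moduli framework, Step~3 needs no Brauer computation. Your twisting cocycle is an honest $\mathrm{SL}_2(\F_3)$-valued cocycle, not merely its image in $\mathrm{PSL}_2(\F_3)$. Moreover, $\mathrm{SL}_2(\F_3)$ acts on the universal (generalized) elliptic curve over $X(3)$, with $-I$ acting as $[-1]$. Hence the Hodge bundle $\omega$ is $\mathrm{SL}_2(\F_3)$-linearized and descends to a line bundle $\omega_M$ on $X_M(3)$ defined over $\Q$.

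Its degree is $[\mathrm{PSL}_2(\mathbb{Z}):\overline{\Gamma(3)}]/12=1$; equivalently, $\dim M_1(\Gamma(3))=2$. A conic carrying a rational line bundle of odd degree has a rational point. So $X_M(3)\cong\mathbb{P}^1_\Q$ for every $M$ with cyclotomic determinant, and Steps~1--2 then give infinitely many $E/\Q$ with $\Q_x(E[3])=K$.

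Either supply this argument or genuinely prove the Hilbert-symbol identity. As written, the hard direction is not established.
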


The natural question addressed in this paper is whether a similar correspondence persists beyond the surjective case, that is, when $\Gal(K/\Q)$ is isomorphic to a proper subgroup of $S_4$. 
More precisely, we study Galois embedding problems associated with all possible images of mod~$3$ Galois representations of elliptic curves --namely $\GL_2(\F_3),SD_{16},D_6,D_4$ and $C_2^2$-- as they arise in the classification of \cite{Zywina}. 


\comentario{\begin{center}
\begin{tikzcd}
& & & \Gal(\Qbar/\Q)
\arrow[->>,d]
\arrow[->>,ldd,dotted,"\varrho"'] & \\
& & & \Gal(K/\Q) \arrow{d}{\varphi} & \\
1 \arrow{r} & \{\pm 1\} \arrow{r} & \widetilde{G}
\arrow{r}{\pi} & G \arrow{r} & 1.
\end{tikzcd}
\end{center}}

In Section~\ref{sec_General_linear_group}, we construct an isomorphism $\Phi \colon S_4 \to \PGL_2(\F_3)$ which allows us to consider suitable embeddings $\Gal(K/\Q) \hookrightarrow \PGL_2(\F_3)$ for the Galois embedding problem of interest. 
In Section~\ref{ECmod5}, we study the genus-zero modular curves \(X_{\widetilde{G}_i}(3)\), for appropriate subgroups ${\widetilde{G}_i}\subseteq \GL_2(\F_3)$, 
\[
\widetilde{G}_4 = \GL_2(\F_3) ,\quad
\widetilde{G}_3 \cong SD_{16},\quad
\widetilde{G}_2 \cong D_6,\quad
\widetilde{G}_1 \cong D_4,\quad
\widetilde{G}_0 \cong C_2^2, 
\]
whose rational non-cuspidal non-CM points classify elliptic curves \(E\) over \(\mathbb{Q}\) for which the image of the mod~\(3\) Galois representation on \(\Aut(E[3])\) is contained in $\widetilde{G}_i$. In Section~\ref{sec_Galois_embedding_problems}, we show our main result in this paper that generalizes Theorem \ref{th_LR}.

\begin{reptheorem}{main_theorem}
    Let $K$ be the splitting field
    of a four-degree polynomial $f$ over $\Q$ with $\Delta_f \equiv -3 \pmod{\Q^{*2}}$. 
    Then there exists an isomorphism 
    $\varphi_f\colon \Gal(K/\Q)\longrightarrow 
    \widetilde{G}_i/\{\pm 1\}$ for some $i\in \{0,1,2,3,4\}$ such that
    the Galois embedding problem 
    \begin{center}
\begin{tikzcd}
& & & \Gal(\Qbar/\Q)
\arrow[->>,d]
\arrow[->>,ldd,dotted,"\varrho"'] & \\
& & & \Gal(K/\Q) \arrow{d}{\varphi_f} & \\
1 \arrow{r} & \{\pm 1\} \arrow{r} & 
\widetilde{G}_i
\arrow{r}{\pi} & \widetilde{G}_i/\{\pm 1\} \arrow{r} & 1,
\end{tikzcd}
\end{center}
is solvable if and only if $K$ is the splitting field of the 3-division polynomial of an elliptic curve $E/\Q$.
\end{reptheorem}

We emphasize that the choice of the isomorphism $\varphi_f$ (which depends on the isomorphism introduced in Section~\ref{sec_General_linear_group}) is both delicate and essential, as replacing it with another isomorphism gives rise to different embedding problems. Finally, Section~\ref{sec_examples} illustrates the theorem with explicit examples.
In forthcoming papers, we shall deal with similar Galois embedding problems for $p>3$.

\section{The isomorphism $\Phi$ for $\PGL_2(\F_3)$}\label{sec_General_linear_group}

In order to generalize Theorem \ref{th_LR} to proper subgroups of $S_4$ we need to fix a way to embed Galois groups arising from four-degree polynomials (with discriminant $-3$ up to squares) into $\PGL_2(\F_3)$. We do this by fixing an appropriate isomorphism $\Phi \colon S_4 \to \PGL_2(\F_3)$.

In $\GL_2(\F_3)$, consider the matrices $$A_{(1 \ 2)}= \begin{pmatrix}
    0 & 1 \\ 
    1 & 0
\end{pmatrix}, \quad A_{(2 \ 3)} = \begin{pmatrix}
    2 & 1 \\ 
    0 & 1
\end{pmatrix} \quad \text{ and } \quad A_{(3 \ 4)} = \begin{pmatrix}
    2 & 0 \\ 
    0 & 1
\end{pmatrix},$$ and define the isomorphism $\Phi \colon S_4 \to \PGL_2(\F_3)$ sending the transpositions $(1 \ 2) \mapsto \pm A_{(1 \ 2)}, \ (2 \ 3) \mapsto \pm A_{(2 \ 3)}$, and $(3 \ 4) \mapsto \pm A_{( 3 \ 4)}$. It is easy to check that, for every elliptic curve $E$ with 3-division polynomial $\psi_{3,E}$ and roots $x_1,x_2,x_3,x_4$, the isomorphism $\Phi$ is chosen in a way that the following diagram commutes:
\begin{center}
\begin{tikzcd}
& & & \operatorname{Gal}(\Qbar/\Q) 
\arrow[->>,d]
\arrow[->,ldd,"\varrho_{3,E}"']& \\
& & & \operatorname{Gal}(\Q_x(E[3])/\Q) \arrow{d}{\varphi} & \\
1\arrow{r} & \{\pm 1\} \arrow{r} & \GL_2(\F_3)
\arrow{r}{\pi} & \PGL_2(\F_3)
\arrow{r} & 1, \\
\end{tikzcd}
\end{center}
where $\varphi = \Phi \circ s_{\psi_{3,E}}$, the map 
\[
s_{\psi_{3,E}} \colon \Gal(\Q_x(E[3])/\Q) \to S_4
\]
is the associated permutation representation, given by $\sigma(x_i)=x_{s_{\psi_{3,E}}(\sigma)(i)}$ for all $\sigma \in \Gal(K/\Q)$, and
\[
\varrho_{3,E}\colon \Gal(\Qbar/\Q)\to \GL_2(\F_3)
\]
is the mod $3$ Galois representation attached to $E$, with respect to the basis $\{P_1,P_2\}=\{(x_1,y_1),(x_2,y_2)\}$ of $E[3]$\footnote{Example: Suppose $\sigma \in \Gal(\Qbar/\Q)$ such that $s_{\psi_{3,E}}(\sigma_{|_K})=(1 \ 2)(3 \ 4)$, then $\sigma(P_1) \in \{ P_2,-P_2\}$, $\sigma(P_2) \in\{ P_1,-P_1\}$ and $\sigma(P_1+P_2)\in \{P_1-P_2,P_2-P_1\}$. Thus, $\varrho_{E,3}(\sigma)$ is either $\begin{pmatrix}
    0 & 2 \\
    1 & 0
\end{pmatrix}$ or $\begin{pmatrix}
    0 & 1 \\
    2 & 0
\end{pmatrix}$. In any case, $\pi \circ \varrho_{E,3}(\sigma) = \Phi \circ s_{\psi_{3,E}} \circ \pi (\sigma)$.}.



Furthermore, as we show in Section \ref{ECmod5}, the following subgroups of $\PGL_2(\F_3)$:
\[
\begin{aligned}
G_0 &= \Phi(\langle (1\ 2) \rangle) &&\cong C_2, \\
G_1 &= \Phi(\langle (1\ 2), (3\ 4) \rangle) &&\cong C_2^2, \\
G_2 &= \Phi(\langle (1\ 2\ 3), (1\ 2) \rangle) &&\cong S_3, \\
G_3 &= \Phi(\langle (1\ 2\ 3\ 4), (2\ 4) \rangle) &&\cong D_4, \\
G_4 &= \Phi(\langle (1\ 2), (2\ 3), (3\ 4) \rangle) &&\cong S_4,
\end{aligned}
\] 
are isomorphic to all the possible Galois groups arising from the $x$-coordinates of 3-torsion points of an elliptic curve over $\Q$. If we define $\widetilde{G}_i = \pi^{-1}(G_i)$, then
\[
\begin{aligned}
\widetilde{G}_0 &\cong C_2^2, \\
\widetilde{G}_1 &\cong D_4, \\
\widetilde{G}_2 &\cong D_6, \\
\widetilde{G}_3 &\cong SD_{16}, \\
\widetilde{G}_4 &= \GL_2(\F_3)
\end{aligned}
\]
and each of these pairs of groups gives rise to an exact sequence 
$$ 1 \to \{\pm I\} \to \widetilde{G}_i \to G_i \to 1.$$

In Section \ref{sec_Galois_embedding_problems}, we discuss how this sequences relates to the family of Galois embedding problems that generalizes Theorem \ref{th_LR}.

\section{The modular curves $X_{\widetilde G}(3)$}\label{ECmod5}

Let $\widetilde{G}=\widetilde{G}_i$ be one of the five subgroups of $\GL_2(\F_3)$ as in the previous section. 
In this section, we focus on the family of elliptic curves over $\Q$ such that the image of the Galois representation attached to their $3$-torsion module is conjugate to $\widetilde{G}$.

It turns out that there is a modular curve $X_{\widetilde{G}}(3)$ over $\Q$ whose
non-cuspidal non-CM rational points classify the elliptic curves $E/\Q$ such that the image of the Galois representation~$\varrho_{E,3}$ giving the action on the torsion module $E[3]$ has image conjugate to a subgroup of $\widetilde{G}$. The genus of $X_{\widetilde{G}}(3)$ is zero,
and \cite{Zywina} provides a modular Hauptmodule $t=t_{\widetilde{G}}(\tau)$ for each case; that is, the corresponding function field is $\Q(X_{\widetilde{G}}(3))=\Q(t_{\widetilde{G}}(\tau))$.

For instance, consider the case $\widetilde{G}_3 \cong SD_{16}$.
Let
$h(\tau)=
 1/3 \,\eta(\tau/3)^3
 /\eta(3\tau)^3$, where
 $$
\eta(\tau) = q^{1/24} \prod_{n=1}^{\infty} (1 - q^n), \quad q = e^{2 \pi i \tau},
 $$
 is the Dedekind eta modular function. Then, the function field of the modular curve $X_{\widetilde{G}_3}(3)$ is $\Q(t)$ where
 $$
 t = t_{\widetilde{G}_3}(\tau) = \frac{
 3 (h(\tau)+1) (h(\tau)+3) (h(\tau)^2+3)}
   {h(\tau) \left(h(\tau)^2+3
   h(\tau)+3\right)}.
 $$
The forgetful map
$j\colon X_{\widetilde{G}_3}(3)\longrightarrow X(1)$ provides the expression for the Klein modular $j$-function:
 $$
 j = j(\tau)  = t^3.
 $$
The function field extension $\Q(t)/\Q(j)$ has degree $[\GL_2(\F_3)\,\colon \widetilde{G}_3]=48/16=3$. 
In this case, there are twelve rational values \(t \in \mathbf{P}^1(\mathbb{Q})\) giving either a cuspidal point  or an elliptic curve over \(\mathbb{Q}\) with CM (complex multiplication). 

Using the Hauptmoduln provided in \cite{Zywina} for these modular curves $X_{\widetilde{G}}(3)$, we obtain the following result.

\begin{proposition}\label{prop_Zywina} Let $E/\Q$ be an elliptic curve
without~CM. 
Define the rational functions
\[
\begin{aligned}
F_1(t) &= 27\frac{(t+1)(t+9)^3}{t^3}, 
&\quad F_2(t) &= t(t^2+3t+3), \\
G_1(t) &= t^3, 
&\quad G_2(t) &= \frac{3(t+1)(t-3)}{t}, 
&\quad G_3(t) &= \frac{t^2+3t+3}{t}.
\end{aligned}
\]
Let $K$ be the splitting field of the 3-division polynomial of $E$; i.e, the smallest field containing all 3-torsion points $x$-coordinates. Then $F_1(F_2(t))=G_1(G_2(G_3(t)))$,
\[ \Gal(\Q(E[3])/\Q) \cong \begin{cases} 
      \GL_2(\F_3) & \text{if } j(E) \not\in F_1(\Q^*) \text{ and } j(E) \not \in G_1(\Q^*), \\
      S_3 \text{ or } D_6 & \text{if } j(E) \in F_1(\Q^*) \text{ and } j(E) \not\in F_1(F_2(\Q^*)), \\
      SD_{16} & \text{if } j(E) \in G_1(\Q^*) \text{ and } j(E) \not \in G_1(G_2(\Q^*)), \\
    D_4 & \text{if } j(E) \in G_1(G_2(\Q^*)) \text{ and } j(E) \not \in G_1(G_2(G_3(\Q^*))), \\
      C_2 \text{ or } C_2^2 & \text{otherwise,}\\
      
   \end{cases}
\]
and 
\[ \Gal(K/\Q) \cong \begin{cases} 
      S_4 & \text{if } j(E) \not\in F_1(\Q^*) \text{ and } j(E) \not \in G_1(\Q^*), \\
      S_3 & \text{if } j(E) \in F_1(\Q^*) \text{ and } j(E) \not\in F_1(F_2(\Q^*)), \\
      D_4 & \text{if } j(E) \in G_1(\Q^*) \text{ and } j(E) \not \in G_1(G_2(\Q^*)), \\
    C_2^2 & \text{if } j(E) \in G_1(G_2(\Q^*)) \text{ and } j(E) \not \in G_1(G_2(G_3(\Q^*))), \\
      C_2 & \text{otherwise.}\\   
   \end{cases}
\]
The ambiguous cases for $\Gal(\Q(E[3])/\Q)$ can be resolved in favor of the larger group by selecting a suitable quadratic twist of the elliptic curve $E$.
\end{proposition}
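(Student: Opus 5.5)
The plan is to derive the proposition from Zywina's classification of mod~$3$ images \cite{Zywina}, together with the group theory of Section~\ref{sec_General_linear_group}. The identity $F_1(F_2(t))=G_1(G_2(G_3(t)))$ is a direct polynomial identity: expand both sides as rational functions in $t$ and compare. The real content is the group-theoretic case split.

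\textbf{Step 1: translate Zywina's tables.} For each maximal or relevant subgroup $\widetilde{G}_i\subseteq\GL_2(\F_3)$ containing $-I$, \cite{Zywina} gives a Hauptmodul $t$ of $X_{\widetilde{G}_i}(3)$ together with the forgetful map to the $j$-line. For the groups in question these maps are the following compositions:
\begin{itemize}
\item for $\widetilde{G}_2\cong D_6$, the normalizer of split Cartan / Borel-type image, the map is $F_1$;
\item for the smaller group $C_2^2$ sitting inside $D_6$, the map is $F_1\circ F_2$;
\item for $SD_{16}$, the normalizer of nonsplit Cartan, the map is $G_1$;
\item for $D_4$ and $C_2^2$, the maps are $G_1\circ G_2$ and $G_1\circ G_2\circ G_3$.
\end{itemize}
Since all these curves have genus zero with a rational point, a non-CM $E/\Q$ has $\pm\varrho_{E,3}(G_\Q)$ contained in a conjugate of $\widetilde{G}$ if and only if $j(E)\in J_{\widetilde{G}}(\Q)$. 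Here $\Q^*$ rather than $\Q$ is used because $t=0,\infty$ are cusps or give $j=0$, which is CM. The excluded CM points are finitely many and are removed by hypothesis.

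\textbf{Step 2: identify the image by the smallest group.} The image up to $\pm1$ is the smallest group in the lattice whose curve contains the point. The lattice $C_2^2\subset D_4\subset SD_{16}\subset\GL_2(\F_3)$ and $C_2^2\subset D_6\subset\GL_2(\F_3)$ accounts for the ``and not in'' conditions. The factorization identity shows that the two $C_2^2$ curves coincide, which justifies the single ``otherwise'' case. One also needs that every subgroup of $\GL_2(\F_3)$ with surjective determinant arising as an image is conjugate into one of these. By Zywina this is the complete list for non-CM curves over $\Q$; for instance, $SL_2$-type or $C_8$ images are excluded, the latter via Weil pairing and complex conjugation.

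\textbf{Step 3: pass to $\pm$-images and twists.} The actual image $\varrho_{E,3}(G_\Q)$ is either $\widetilde{G}$ or an index-$2$ subgroup not containing $-I$. For $SD_{16}$, $D_4$ and $\GL_2(\F_3)$, no index-$2$ subgroup avoiding $-I$ that still has surjective determinant and contains a complex-conjugation element of determinant $-1$ occurs. I expect this to be the main case check: for $\GL_2(\F_3)$ there is no complement to $\pm1$ at all, and for $SD_{16}$ and $D_4$ a short check using that complex conjugation has eigenvalues $1,-1$ does the job. For $D_6$ and $C_2^2$, the subgroups $S_3$ and $C_2$ avoiding $-I$ do occur, which explains the ambiguity. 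Twisting by a quadratic character $\chi$ replaces $\varrho$ by $\chi\varrho$, which enlarges the image to the full preimage $\widetilde{G}$. This gives the final sentence of the proposition.

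\textbf{Step 4: the statement for $K$.} Since $\Gal(K/\Q)$ is the image in $\PGL_2(\F_3)$ of $\varrho_{E,3}$ (Section~\ref{sec_General_linear_group}), it equals $\widetilde{G}/\{\pm1\}$, namely $S_4,S_3,D_4,C_2^2,C_2$ respectively. This is independent of twisting.
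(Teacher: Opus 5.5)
Your proof is correct. For the statement about $\Gal(K/\Q)$ it follows a different route from the paper, although both arguments take the classification of $\varrho_{E,3}(G_\Q)$ and the $j$-maps from Zywina.

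The paper writes down explicit Weierstrass families $E_{t,S_4},\dots,E_{t,C_2}$ with $j$-invariants $t$, $F_1(t)$, $G_1(t)$, $G_1(G_2(t))$, $F_1(F_2(t))$. It factors their $3$-division polynomials and argues that $E$ is a twist of $E_{t,G}$ for the smallest such $G$, so that $\Gal(K/\Q)$ embeds in $G$. It then fixes the order using $[\Q(E[3])\colon K]\le 2$ and $\sqrt{-3}\in K$. The twist statement is handled by choosing $d$ with $\sqrt{d(x_1^3+Ax_1+B)}\notin\Q_x(E[3])$.

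You instead use that $\Gal(K/\Q)$ is the projective image of $\varrho_{E,3}$: the kernel of the action on $x$-coordinates is exactly $\varrho_{E,3}^{-1}(\{\pm I\})$, so $\Gal(K/\Q)\cong \pm\varrho_{E,3}(G_\Q)/\{\pm I\}$. This gives the second table directly from the first, is visibly twist-invariant, and needs no order counting. Your Step 3 also explains the two ambiguous cases intrinsically: only $D_6$ and $C_2^2$ admit a complement to $\{\pm I\}$. What the paper's computational route buys is the explicit models and factored division polynomials, which are reused in the proof of Theorem~\ref{main_theorem} to realize a given $K$ via Tschirnhaus transformations.

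Three small corrections:
\begin{itemize}
\item $\widetilde G_2\cong D_6$ is the Borel subgroup, not the normalizer of a split Cartan. The normalizer of a split Cartan is $\widetilde G_1\cong D_4$, and you list its map $G_1\circ G_2$ correctly.
\item In Step 3 no complex-conjugation check is needed. For $\GL_2(\F_3)$, $SD_{16}$ and $D_4$, the element $-I$ lies in the commutator subgroup, hence in every index-$2$ subgroup.
\item The twisting character $\chi$ must be chosen with $\ker\chi\not\supseteq\ker\varrho_{E,3}$, i.e.\ its quadratic field must not be contained in $\Q(E[3])$. Only then does $\chi\varrho_{E,3}$ have image the full preimage $\widetilde G$.
\end{itemize}
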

\begin{proof}
The isomorphisms involving $\Gal(\Q(E[3])/\Q)$ follows from \cite{Zywina}. For the isomorphisms involving $\Gal(K/\Q)$ define the following families of elliptic curves:
\begin{align*}
E_{t,S_4} \colon y^2 &= x^3 -3 t(t - 1728)x-2t(t - 1728)^2 \\
E_{t,S_3} \colon y^2 &= x^3 -3(t+1)(t+9)x-2(t+1)(t^2-18t-27) \\
E_{t,D_4} \colon y^2 &= x^3 -3t(t^3-1728)x - 2(t^3-1728)^2 \\
E_{t,C_2^2} \colon y^2 &= x^3-3(t - 3) (t + 1) (t^2 - 6t - 3)x-2(t^2 + 3) (t^2 - 6t - 3)^2 \\
E_{t,C_2} \colon y^2 &= x^3-3(t+1)(t+3)(t^2+3)x-2(t^2 - 3)(t^4 + 6t^3 + 18t^2 + 18t + 9).
\end{align*}
It is easy to check that 
\begin{align*}
    j(E_{t,S_4}) &= t, \\
    j(E_{t,S_3}) &= F_1(t), \\
    j(E_{t,D_4}) &= G_1(t), \\
    j(E_{t,C_2^2}) &= G_1(G_2(t)), \\
    j(E_{t,C_2}) &= F_1(F_2(t))=G_1(G_2(G_3(t)))
\end{align*}
and the 3-division polynomials of this families of elliptic curves are
\begin{align*}
    \psi_{3,E_{t,S_4}}(x) &= 3(x^4-6 t(t - 1728)x^2 + -8t(t - 1728)^2x -3 t^2(t - 1728)^2) \\
    \psi_{3,E_{t,S_3}}(x) &= 3(x-3t-3)(x^3 + 3(t + 1)x^2 + 3(t+1)(t-15)x + (t+1)(t+9)^2) \\
    \psi_{3,E_{t,D_4}}(x) &= 3x^4 - 18t(t^3-1728)x^2 -24(t^3-1728)^2x - 9t^2(t^3-1728)^2 \\
    \psi_{3,E_{t,C_2^2}}(x) &= 3(x^2-2(t^2 - 6t - 3)x+(t+1)^2(t^2-6t-3))(x^2+2(t^2 - 6t - 3)x+(t - 3)^2  (t^2 - 6t - 3)), \\
    \psi_{3,E_{t,C_2}}(x) &= 3 (x - 3t^2 - 6t - 3) (x + t^2 + 6t + 9) (x^2 + (2t^2 - 6)x + t^4 + 6t^2 + 9),
\end{align*}
respectively. 

Let $G$ be the smallest\footnote{By ``smallest'' we mean that no proper subgroup of $G$ satisfies this condition.} group such that $j(E)=j(E_{t,G})$ for some $t \in \Q$. Then $E$ is isomorphic, up to quadratic twists, to $E_{t,G}$. The Galois group of the splitting field of the 3-division polynomial of $E$ is isomorphic to the Galois group of the splitting field of $\psi_{3,E_{t,G}}$, which is clearly isomorphic to a subgroup of $G$. This, together with the fact that\footnote{This is a standard fact in the literature on elliptic curves, see \cite{bandini2016fields}.} $[\Q(E[3]):K] \leq 2$ and $\sqrt{-3} \in K$, is enough to prove $\Gal(K/\Q) \cong G$.

The last part of the theorem follows by choosing, if necessary, a quadratic twist by $d \in \Q$ $$E_{t,G}^d \colon dy^2=x^3+Ax+B$$ such that $\sqrt{d(x_1^3+Ax_1+B)} \not\in \Q_x(E_{t,G}[3])$, where $x_1$ is a root of $\psi_{3,E_{t,G}}$.
\comentario{
\begin{itemize}
    \item[1.] If $j(E) \not\in F_1(\Q^*) \text{ and } j(E) \not \in G_1(\Q^*)$ then $\Gal(\Q(E[3]/\Q) \cong \GL_2(\F_3)$, $\Gal(K/\Q)$ is isomorphic to a subgroup of $S_4$ and $[\Q(E[3]):K] \leq 2$. This immediately implies $\Gal(K/\Q) \cong S_4$.
    \item[2.] If $j(E) \in F_1(\Q^*) \text{ and } j(E) \not\in F_1(F_2(\Q^*))$ then $E$ is, up to a quadratic twist, isomorphic to $$  E_{t,S_3} \colon y^2 = x^3 -3(t+1)(t+9)x-2(t+1)(t^2-18t-27)$$
    for some $t \in \Q^*$.
    This means that $\psi_{3,E}$ has the same splitting field as
    $$ \psi_{3,E_{t,S_3}}(x) = 3(x-3t-3)(x^3 + 3(t + 1)x^2 + 3(t+1)(t-15)x + (t+1)(t+9)^2).$$
    Given that $\Gal(\Q(E[3])/\Q) \cong S_3$ or $D_6$, $\Gal(K/\Q)$ is isomorphic to a subgroup of $S_3$, $\Gal(K/\Q)$ has a even number of elements since $\sqrt{-3} \in K$ and $[\Q(E[3]):K] \leq 2$, we conclude that $\Gal(K/\Q) \cong S_3$.
    \item[3.] If $j(E) \in G_1(\Q^*) \text{ and } j(E) \not \in G_1(G_2(\Q^*))$ then $E$ is, up to a quadratic twist, isomorphic to 
    $$ E_{t,D_4} \colon y^2 = x^3 -3t(t^3-1728)x - 2(t^3-1728)^2.$$
    for some $t \in \Q^*$. This means that $\psi_{3,E}$ has the same splitting field as
    $$\psi_{3,E_{t,D_4}}(x) = 3x^4 - 18t(t^3-1728)x^2 -24(t^3-1728)^2x - 9t^2(t^3-1728)^2.$$
    By Lemma \ref{lemma_same_splitting_field}, we can obtain a new polynomial with the same splitting field as $\psi_{3,E_{t,D_4}}$ via the Tschirnhaus transformation
$$ \psi_{3,E_t,D_4,*}(x) = \prod_{i=1}^4(x-y_i)= 9(x^4+Ax^2-3D^2)$$
where $y_i = -\frac{t(t-12)}{8}-\frac{x_i}{12}+\frac{x_i^2}{24(t^2+12t+144)}$, $A = -6(t - 12) (t + 24)$ and $D=3t(t-12)$ 
    \item[4.] If $j(E) \in G_1(G_2(\Q^*)) \text{ and } j(E) \not \in G_1(G_2(G_3(\Q^*)))$ then $E$ is, up to a quadratic twist, isomorphic to
    $$ E_{t,C_2^2} \colon y^2 = x^3-3(t - 3) (t + 1) (t^2 - 6t - 3)x-2(t^2 + 3) (t^2 - 6t - 3)^2$$
    This means that $\psi_{3,E}$ has the same splitting field as
    $\psi_{3,E_{t,C_2^2}}(x) = 3f_1(x)f_2(x),$
    where $$f_1(x)=x^2-2(t^2 - 6t - 3)x+(t+1)^2(t^2-6t-3)$$ and $$f_2(x) = x^2+2(t^2 - 6t - 3)x+(t - 3)^2  (t^2 - 6t - 3).$$
    Thus, $\Gal(K/\Q) \cong C_2^2$ since $\Gal(\Q(E[3])) \cong D_4$ and $[\Q(E[3]):K] \leq 2$.
    \item[5.] Otherwise $E$ is, up to a quadratic twist, isomorphic to 
    $$ E_{t,C_2} \colon y^2 = x^3-3(t+1)(t+3)(t^2+3)x-2(t^2 - 3)(t^4 + 6t^3 + 18t^2 + 18t + 9).$$
    This means that $\psi_{3,E}$ has the same splitting field as 
    $$\psi_{3,E_{t,C_2}}(x) = 3 (x - 3t^2 - 6t - 3) (x + t^2 + 6t + 9) (x^2 + (2t^2 - 6)x + t^4 + 6t^2 + 9),$$
    which is $\Q(\sqrt{-3})$.
\end{itemize}
}

\end{proof}


\section{Galois embedding problems and 3-torsion fieds of elliptic curves}\label{sec_Galois_embedding_problems}

In this section, we extend Theorem~\ref{th_LR} to quartic polynomials whose Galois group is not necessarily isomorphic to~$S_4$. The proof is based on the family of elliptic curves introduced in Proposition~\ref{prop_Zywina}.

The embedding problems considered here are classically described in terms of a cohomological obstruction (sometimes expressed via $2$-torsion elements in the Brauer group). However, such an obstruction depends explicitly on the arithmetic of the fields involved and is often difficult to handle in practice. The main result of this section provides instead a geometric criterion: solvability is characterized by the existence of a suitable elliptic curve whose mod~$3$ Galois representation realizes the corresponding embedding problem. With the notation of Section~\ref{sec_General_linear_group}, the theorem reads as follows:



\begin{theorem}\label{main_theorem}
Let $K$ be the splitting field of  a four-degree polynomial $f$ over $\Q$ with $\Delta_f \equiv -3 \pmod{\Q^{*2}}$ . 
\begin{itemize}
\item[$a)$] One can label the four distinct roots $\alpha_1,\alpha_2,\alpha_3$, $\alpha_4$ of $f$ in a way such that the monomorphism $\varphi_f = \Phi \circ s_f$ satisfies $\varphi_f(\Gal(K/\Q))=G_i$ for some $i \in \{0,\dots,4\}$, where $s_f \colon \Gal(K/\Q) \hookrightarrow S_4$ is the permutation representation (i.e,  $\sigma(\alpha_i)=\alpha_{s_f(\sigma)(i)}$ for every $\sigma \in \Gal(K/\Q)$).
\item[$b)$] The following statements are equivalent:
\begin{itemize}
    \item[$1)$] There exists a non-CM elliptic curve $E/\Q$ such that $K$ is the splitting field of the 3 division polynomial of $E$.
    \item[$2)$] The Galois embedding problem
\begin{center}
\begin{tikzcd}
& & & \operatorname{Gal}(\Qbar/\Q) 
\arrow[->>,d]
\arrow[->>,ldd,dotted,"\varrho"']& \\
& & & \operatorname{Gal}(K/\Q) \arrow{d}{\varphi_f} & \\
1\arrow{r} & \{\pm 1\} \arrow{r} & \widetilde{G}_i
\arrow{r}{\pi} & G_i
\arrow{r} & 1. \\
\end{tikzcd}
\end{center} is solvable.
\end{itemize}
\item[$c)$] In case that $1)$ holds, the image of the mod 3 representation $\varrho_{3,E} \colon \Gal(\Qbar/\Q) \to \GL_2(\F_3)$ with respect to some basis of $E[3]$ is a subgroup of $\widetilde{G}_i$ and  $\varrho_{3,E} \colon \Gal(\Qbar/\Q) \to \widetilde{G}_i$ is a weak solution\footnote{A weak solution is a group morphism that makes the diagram commute. A proper solution is an exhaustive weak solution.} to the Galois embedding problem described at $2)$. Furthermore, this solution is proper if we take a suitable quadratic twist of $E$. 
\end{itemize}
\end{theorem}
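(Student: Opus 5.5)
For part $a)$ we classify the possible images of $s_f(\Gal(K/\Q))$ in $S_4$ up to conjugacy, using the constraint $\Delta_f\equiv -3 \pmod{\Q^{*2}}$. Since $\sqrt{-3}\notin\Q$, the group $H=s_f(\Gal(K/\Q))$ is not contained in $A_4$, so it contains an odd permutation. The conjugacy classes of subgroups of $S_4$ not contained in $A_4$ are: $\langle(1\,2)\rangle$, $\langle(1\,2\,3\,4)\rangle\cong C_4$, $\langle(1\,2),(3\,4)\rangle$, $S_3$, $D_4$ and $S_4$. The plan is to exclude $C_4$ and then relabel the roots, i.e.\ conjugate $H$ by an element of $S_4$, so that $H$ becomes exactly one of the five groups $\Phi^{-1}(G_i)$. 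Relabeling replaces $s_f$ by an inner conjugate, so this is harmless.

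Excluding $C_4$ works as follows. If $H\cong C_4$, then $K/\Q$ is cyclic quartic and its unique quadratic subfield is $\Q(\sqrt{\Delta_f})=\Q(\sqrt{-3})$. A quadratic field embeds in a $C_4$-extension only if it is a sum of two squares, which amounts to $-1$ being a norm from $\Q(\sqrt{d})$. For $d=-3$ this fails, since $-3<0$ cannot be a sum of two rational squares. Hence $C_4$ does not occur. One must also check that when $f$ is reducible with a repeated root this cannot happen, because the roots are assumed distinct. This gives $a)$.

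For $b)$, the direction $1)\Rightarrow 2)$ uses the commutative diagram of Section~\ref{sec_General_linear_group}. Given $E$, we have $K=\Q_x(E[3])$, and the labelling of the roots $x_1,\dots,x_4$ of $\psi_{3,E}$ differs from that of the $\alpha_j$ by some permutation. After conjugating by the corresponding matrix, which amounts to changing the basis of $E[3]$, we get $\pi\circ\varrho_{3,E}=\varphi_f\circ\mathrm{res}$. Consequently the image of $\varrho_{3,E}$ lies in $\pi^{-1}(G_i)=\widetilde G_i$, so $\varrho_{3,E}$ is a weak solution. This already proves the first part of $c)$.

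Properness is handled separately. A weak solution that is not surjective has image mapping isomorphically onto $G_i$. In that case we twist by $d$ as in the last paragraph of the proof of Proposition~\ref{prop_Zywina}, chosen so that $\Q(E^d[3])\neq K$. This enlarges the image to all of $\widetilde G_i$, and we use Proposition~\ref{prop_Zywina} to see that the full group $\widetilde G_i$ is attained.

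For $2)\Rightarrow 1)$, the case $i=4$ is Theorem~\ref{th_LR}, so we proceed case by case for $i=0,\dots,3$. The strategy is to show that every field $K$ with the given Galois group, discriminant class $-3$, and solvable embedding problem is the splitting field of $\psi_{3,E_{t,G}}$ for some $t\in\Q$, using the explicit families of Proposition~\ref{prop_Zywina}:
\begin{itemize}
\item $i=0$: here $K=\Q(\sqrt{-3})$, which is realized by $E_{t,C_2}$, so there is nothing to prove.
\item $i=1$: $K=\Q(\sqrt{-3},\sqrt{a})$, and the embedding problem into $D_4$ with kernel $\pm1$ is solvable if and only if a specific quaternion-algebra condition holds, namely a norm condition of the form $(a,-3a)$ or $(-3,a)$ trivial, with the exact pair determined by $\Phi$. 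We then match this condition with the factorization of $\psi_{3,E_{t,C_2^2}}$, whose quadratic factors have discriminants $\equiv (t^2-6t-3)$ times squares, and solve for $t$.
\item $i=2$: $K$ is the splitting field of a cubic with discriminant $-3$. The $D_6=C_2\times S_3$ problem reduces to a $C_2\times C_2$-type condition that is automatically solvable, and we must show that every pure cubic field $\Q(\sqrt[3]{m})$ arises from $E_{t,S_3}$. This is plausible because $\psi_{3,E_{t,S_3}}$ has cubic factor discriminant $-3$ times a square, giving a rational parametrization.
\item $i=3$: $K$ is a $D_4$-field containing $\Q(\sqrt{-3})$. The $SD_{16}$ obstruction is a Brauer class $(\cdot,\cdot)$ computed via the Tschirnhaus form $x^4+Ax^2-3D^2$, and we match the solvability condition with the parameter $t$.
\end{itemize}
Finally, the non-CM requirement is met by avoiding the finitely many CM and cuspidal values of $t$, replacing $t$ by another parameter giving the same field where necessary.

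The main obstacle is this converse direction, specifically showing that the families $E_{t,G}$ reach every admissible field $K$ rather than only a thin subset. The first approach to try is to express the embedding obstruction explicitly as a quaternion algebra in terms of the invariants of $K$. One then shows that its triviality yields a rational point on a conic whose parametrization produces $t$. The choice of $\Phi$ is what makes the obstruction coincide with the modular one, rather than with the obstruction for an outer-twisted embedding.
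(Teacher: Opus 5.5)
Part $a)$ is fine. Excluding $C_4$ because $-3$ is not a sum of two squares is an equivalent substitute for the paper's observation that complex conjugation acts as an odd involution. Your twisting argument for properness in $c)$ also works. There are, however, two gaps.

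\textbf{The smaller gap is in $1)\Rightarrow 2)$.} The sentence ``the labelling of the roots of $\psi_{3,E}$ differs from that of the $\alpha_j$ by some permutation'' assumes a $\Gal(K/\Q)$-equivariant bijection between the roots of $f$ and those of $\psi_{3,E}$. In other words, it assumes that $s_f$ and $s_{\psi_{3,E}}$ are conjugate in $S_4$. Having the same splitting field does not give this. For example, $(x^2-2)(x^2+6)$ and $(x^2-2)(x^2+3)$ both split over $\Q(\sqrt2,\sqrt{-3})$. Yet the nontrivial automorphism fixing $\sqrt{-3}$ acts as a double transposition on the roots of the first and as a transposition on the roots of the second. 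This is exactly the outer twist you mention at the end, and it changes the embedding problem. The paper proves the needed conjugacy in Lemma~\ref{lemma_conjugates}. It uses $\Delta_f\equiv\Delta_{\psi_{3,E}}\equiv -3$ to show that $\{\Q(\alpha_j)\}=\{\Q(x_j)\}$; for instance, in the $D_4$ case both are the conjugate pair of quartic subfields not containing $\sqrt{-3}$. You need this argument.

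\textbf{The serious gap is $2)\Rightarrow 1)$ for $i=1,2,3$.} This is the real content of the theorem, and you only outline it.

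\emph{Case $i=1$.} You leave the obstruction undecided between $(a,-3a)$ and $(-3,a)$. These are different classes: at $a=-2$ one has $(a,-3a)=(-2,3)=1$ but $(-3,-2)=-1$. Settling it requires checking that the element fixing $\sqrt{-3}$, namely $\Phi((1\,2)(3\,4))$, lifts to an element of order $4$ in $\widetilde G_1$. That gives the obstruction $(a,-3a)=(a,3)$. Your ``solve for $t$'' step then rests on the identity $t^2-6t-3=(t-3)^2-3\cdot 2^2$. It shows that $a\equiv t^2-6t-3 \pmod{\Q^{*2}}$ has solutions (infinitely many) if and only if $a$ is a norm from $\Q(\sqrt3)$, that is, if and only if $(a,3)=1$.

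\emph{Case $i=2$.} ``Plausible'' must become a proof. One way: the cubic factor of $\psi_{3,E_{t,S_3}}$ is a Tschirnhaus transform of $z^3-3sz+s(s+1)$ with $s=t+1$. Its Cardano resolvent is $(w+s)(w+s^2)$, so its root field is $\Q(\sqrt[3]{t+1})$. Taking $t=mr^3-1$ therefore realizes $\Q(\sqrt[3]{m},\sqrt{-3})$ for infinitely many $t$. The paper instead matches Tschirnhaus transforms using the resultant $x^2-3x+9y$.

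\emph{Case $i=3$.} You give no argument, and this is the heart of the theorem. What is needed:
\begin{itemize}
\item normalize $f=x^4+ax^2-3d^2$ and identify the obstruction $(3(a^2+12d^2),2a)$;
\item use the transforms $\beta_j=n\alpha_j^3+m\alpha_j$ to obtain $x^4+a_*x^2-3d_*^2$;
\item note that $a_*=A(t)$ and $d_*=D(t)$ for some $t$ if and only if $(n,m)$ lies on $(a_*+2d_*)^2=1728(a_*+6d_*)$;
\item reduce this quartic to the conic $1728(a_*+6d_*)=1$ via Lemma~\ref{lemma:quadratic_forms};
\item verify, via a Hilbert-symbol identity, that this conic has a rational point exactly when $(3(a^2+12d^2),2a)=1$;
\item show it then has infinitely many points giving distinct $t$, so CM $j$-invariants can be avoided.
\end{itemize}

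Without these steps the equivalence $1)\Leftrightarrow 2)$ is not established.
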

\begin{proof}[Proof of Theorem \ref{main_theorem} $a)$.]
A different ordering of the roots of $f$ results in a conjugate of the permutation representation $s_f$. Given that the subgroup lattice of $S_4$ is
\begin{center}
   \begin{tikzpicture}[scale=1.0,sgplattice]
  \node[char] at (3.12,0) (1) {\gn{C1}{C_1}};
  \node at (4.75,0.803) (2) {\gn{C2}{C_2}};
  \node at (1.5,0.803) (3) {\gn{C2}{C_2}};
  \node at (2.12,2.02) (4) {\gn{C3}{C_3}};
  \node[char] at (4.12,2.02) (5) {\gn{C2^2}{C_2^2}};
  \node at (0.125,2.02) (6) {\gn{C2^2}{C_2^2}};
  \node at (6.12,2.02) (7) {\gn{C4}{C_4}};
  \node at (0.625,3.35) (8) {\gn{S3}{S_3}};
  \node at (5.62,3.35) (9) {\gn{D4}{D_4}};
  \node[char] at (3.12,3.35) (10) {\gn{A4}{A_4}};
  \node[char] at (3.12,4.3) (11) {\gn{S4}{S_4}};
  \draw[lin] (1)--(2) (1)--(3) (1)--(4) (2)--(5) (2)--(6) (3)--(6) (2)--(7)
     (3)--(8) (4)--(8) (5)--(9) (6)--(9) (7)--(9) (4)--(10) (5)--(10)
     (8)--(11) (9)--(11) (10)--(11);
  \node[cnj=2] {3};
  \node[cnj=3] {6};
  \node[cnj=4] {4};
  \node[cnj=6] {3};
  \node[cnj=7] {3};
  \node[cnj=8] {4};
  \node[cnj=9] {3};
\end{tikzpicture},
\end{center} any two subgroups of $S_4$ that are both isomorphic to one of $C_2,C_2^2,C_2^2,S_3,D_4$ or $S_4$ are conjugates as long as they are not subgroups of $A_4$. Hence, the first statement of Theorem \ref{main_theorem} follows just by noticing:
\begin{itemize}
    \item[1.] The image $s_f(\Gal(K/\Q))$ is not a subgroup of $A_4$ since $\Delta_f$ is not a square.
    \item[2.] The image $s_f(\Gal(K/\Q))$ is not isomorphic to $C_4$ since complex conjugation is an involution of $K$ whose image by $s_f$ is odd (but $C_4 \leq S_4$ does not contains odd involutions).
    \item[2.] Every $\Phi^{-1}(G_i)$ is not a subgroup of $A_4$.
\end{itemize}  
\end{proof}

\begin{lemma}\label{lemma_conjugates}
    Let $f$ and $g$ be two four-degree polynomials such that 
    they share the same splitting field $K$ and $\Delta_f \equiv \Delta_g \equiv -3 \pmod{\Q^{*2}}$. Then $s_f$ and $s_g$ are conjugate permutation representations.
\end{lemma}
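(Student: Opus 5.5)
The plan is to reduce the statement to a question about automorphisms of the common image subgroup. Write $\Gamma=\Gal(K/\Q)$. First, since $s_f$ and $s_g$ are injective homomorphisms $\Gamma\hookrightarrow S_4$ whose images are isomorphic to $\Gamma$, the argument in the proof of Theorem~\ref{main_theorem} $a)$ applies to both. Neither image lies in $A_4$, because $\Delta_f,\Delta_g$ are non-squares. Neither is $C_4$, by the complex conjugation argument. Hence both images lie in the same $S_4$-conjugacy class, namely that of one of $\Phi^{-1}(G_i)$. After relabelling the roots of $g$, that is, replacing $s_g$ by a conjugate, I may assume $s_f(\Gamma)=s_g(\Gamma)=H$. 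Then $\alpha=s_g\circ s_f^{-1}$ is an automorphism of $H$. It suffices to show that $\alpha$ is induced by conjugation by an element of $N_{S_4}(H)$, since then $s_g=\tau s_f\tau^{-1}$.

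The key extra input is the discriminant hypothesis. For any quartic $h$ with splitting field $K$, the fixed field of $s_h^{-1}(A_4)$ is $\Q(\sqrt{\Delta_h})$. Since $\Delta_f\equiv\Delta_g\equiv -3$, we get
\[
s_f^{-1}(A_4)=s_g^{-1}(A_4)=\Gal(K/\Q(\sqrt{-3})).
\]
Consequently $\alpha$ preserves the subgroup $H\cap A_4$, and hence preserves the parity of permutations in $H$.

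Next I go case by case over the possible $H$ (up to conjugacy): $C_2$, $C_2^2=\langle(1\,2),(3\,4)\rangle$, $S_3$, $D_4$, $S_4$.
\begin{itemize}
\item For $C_2$ there is nothing to check.
\item For $S_3$ and $S_4$ every automorphism is inner, and inner automorphisms of $H$ are conjugations by elements of $H\subseteq S_4$.
\item For $H=\langle(1\,2),(3\,4)\rangle$, a parity-preserving automorphism fixes the even element $(1\,2)(3\,4)$. It therefore either is the identity or swaps $(1\,2)$ and $(3\,4)$; the latter is conjugation by $(1\,3)(2\,4)$.
\item For $H=\langle(1\,2\,3\,4),(2\,4)\rangle\cong D_4$, $\mathrm{Aut}(D_4)\cong D_4$ has the four inner automorphisms, and these are realized in $S_4$. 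The four outer ones interchange the two Klein four-subgroups of $H$: the normal $V_4\subseteq A_4$ and $\{e,(1\,3),(2\,4),(1\,3)(2\,4)\}$. Such an automorphism sends $(1\,3)$ to an element of the first, hence to an even element, so it does not preserve parity. Thus only inner automorphisms survive.
\end{itemize}

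The main obstacle I expect is the $D_4$ case. There $N_{S_4}(H)=H$ realizes only inner automorphisms, so the parity/discriminant constraint is genuinely needed to exclude the outer automorphisms. I would verify explicitly that every outer automorphism of $D_4$ exchanges transpositions with double transpositions; the clean way is the Klein-subgroup swap described above. The remaining cases are routine once the reduction to "$\alpha$ preserves $H\cap A_4$" is in place.
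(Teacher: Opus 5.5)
Your proof is correct, but it takes a different route from the paper's.

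\textbf{The paper's route.} The paper argues field-theoretically. In each possible case it identifies the fields $\Q(\alpha_i)$ generated by single roots intrinsically inside $K$. The hypothesis $\Delta_f\equiv -3$ is used, in the $C_2^2$ and $D_4$ cases, to single out the subfields \emph{not} containing $\sqrt{-3}$. The paper then concludes that the roots of $f$ and of $g$ generate the same fields. Finally, it matches roots via Tschirnhaus transformations between corresponding irreducible factors. In effect it shows the two root sets are isomorphic $\Gal(K/\Q)$-sets because their point stabilizers agree.

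\textbf{Your route.} You move both images to a common $H$ and ask whether the automorphism $s_g\circ s_f^{-1}$ of $H$ is induced by $N_{S_4}(H)$. You use the discriminant only to force this automorphism to preserve parity.

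\textbf{Comparison.} The hypothesis does the same job in both proofs: it distinguishes odd from even involutions. Your version isolates exactly why it is indispensable. The extra automorphisms of $\langle(1\,2),(3\,4)\rangle$ and the outer automorphisms of $D_4$ are precisely those that exchange transpositions with double transpositions. These are not realized in $S_4$; for example, $N_{S_4}(D_4)=D_4$.

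Your argument also sidesteps bookkeeping the paper leaves implicit. One point is the repetitions among the fields $\Q(\alpha_i)$, since the paper works with a set rather than a multiset. Another is the need to choose the bijection of roots compatibly within each orbit, not merely so that $\Q(\alpha_i)=\Q(\beta_{\sigma(i)})$. A side remark: you use the value $-3$ only through non-squareness and the exclusion of $C_4$. The latter is not even needed, since inversion on $\langle(1\,2\,3\,4)\rangle$ is conjugation by $(1\,3)$.

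What the paper's approach buys is constructiveness. It exhibits an explicit Tschirnhaus transformation carrying the roots of $f$ to those of $g$. That is the mechanism reused in the case-by-case proofs of part $b)$.
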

\begin{proof}
Let $\alpha_1,\alpha_2,\alpha_3$ and $\alpha_4$ be the roots of $f$ and let $\beta_1, \beta_2, \beta_3$ and $\beta_4$ be the roots of $g$. The set of fields $S=\{\Q(\alpha_1),\Q(\alpha_2),\Q(\alpha_3),\Q(\alpha_4)\}$ is:
\begin{itemize}
    \item[1.] The set of all the four-degree subfields of $K$ if $\Gal(K/\Q) \cong S_4$.
    \item[2.] The set of all the three-degree subfields of $K$ and the field $\Q$ if $\Gal(K/\Q) \cong S_3$.
    \item[3.] The set of the two four-degree subfields of $K$ not containing $\sqrt{-3}$ if $\Gal(K/\Q) \cong D_4$.
    \item[4.] The set $\{K\}$ if $\Gal(K/\Q) \cong C_4$.
    \item[5.] The set of the two two-degree subfields of $K$ not containing $\sqrt{-3}$ if $\Gal(K/\Q) \cong C_2^2$.
    \item[6.] The set $\{\Q(\sqrt{-3}),\Q\}$ if $\Gal(K/\Q) \cong C_2$.
\end{itemize}
Thus, $S=\{\Q(\beta_1),\Q(\beta_2),\Q(\beta_3),\Q(\beta_4)\}$ as well. Therefore, there exists $\sigma \in S_4$ such that $\Q(\alpha_i)=\Q(\beta_{\sigma(i)})$. This means that there exists a bijection between irreducible factors of $f$ and $g$ with the property that corresponding factors are related via a Tschirnhaus transformation. This immediately implies that $s_f$ and $s_g$ are conjugate.
\end{proof}

Lemma~\ref{lemma_conjugates} allows us to consider the Galois embedding problem independently of the choice of  polynomial, provided that the polynomials have the same splitting field and discriminant equal to $-3$ up to squares. Therefore, we may simply denote $\varphi_f$ by~$\varphi$.

We now prove the third statement of Theorem~\ref{main_theorem}; this will also establish the implication $1) \implies 2)$ in the second statement.

\begin{proof}[Proof of Theorem \ref{main_theorem} c).]
    Suppose that $E$ is an elliptic curve whose 3-division polynomial $\psi_{3,E}$ has the same splitting field as $f$. By Lemma \ref{lemma_conjugates}, $s_f$ and $s_{\psi_{3,E}}$ are conjugate since $\Delta_{\psi_{3,E}} \equiv -3 \pmod{\Q^{*2}}$. Let $\{x_i\}_{i=1}^4$ be the roots of $\psi_{3,E}$  and consider the non-trivial 3-torsion points of $E$:
$$ \pm P_1=(x_1,\pm y_1), \ \pm P_2=(x_2,\pm y_2), \ \pm (P_1+P_2)=(x_3,\pm y_3) \ \text{ and } \ \pm (P_1-P_2)=(x_4,\pm y_4).$$
As mentioned in Section \ref{sec_General_linear_group}, $\Phi$ was chosen in a way such that $\varrho_{3,E} \colon \Gal(\Qbar/\Q) \to \GL_2(\F_3)$ with respect to the basis $\{P_1,P_2\}$ of $E[3]$ satisfies $\pi \circ \varrho_{3,E} = \varphi \circ \pi.$
Therefore, $\varrho_{3,E} \colon \Gal(\Qbar/\Q) \to \widetilde{G}_i$ is a weak solution to the embedding problem. By Proposition \ref{prop_Zywina}, an appropriate quadratic twist of $E$ gives a proper solution.
\end{proof}

The second statement of Theorem \ref{main_theorem} is proved case by case. The surjective case $$\varphi(\Gal(K/\Q))=G_4 \cong S_4$$ was treated in \cite{LR}. In the remaining cases, a similar approach applies: we construct a non-CM elliptic curve $E$, given by a short Weierstrass model, whose $3$-division polynomial $\psi_{3,E}$ is a Tschirnhaus transformation of $f$.

\begin{proof}[Proof of the case $G_0 \cong C_2$]
First, notice that the obstruction associated to the Galois embedding problem is trivial since the sequence $$1 \longrightarrow \{\pm 1\} \longrightarrow \widetilde{G}_0 \longrightarrow G_0 \longrightarrow 1$$ is split.
Therefore, we have to prove that $K=\Q(\sqrt{-3})$ is the splitting field of 3-division polynomial of a non-CM elliptic curve $E$. The elliptic curve $E \colon y^2 = x^3 -6x-\frac{13}{4}$ is an example. In fact, as described in Proposition \ref{prop_Zywina}, $E_{t,C_2}$ is an infinite family of examples (only considering those without CM).
\end{proof}
\begin{proof}[Proof of the case $G_1 \cong C_2^2$.]
Given that $\Delta_f$ is not a square, we have that $f=g_1g_2$ where $g_1$ and $g_2$ are 2-degree irreducible polynomials. Hence, $K=\Q(\sqrt{\Delta_{g_1}},\sqrt{\Delta_{g_2}})=\Q(\sqrt{\Delta_{g_1}},\sqrt{-3})$ and 
$$ \Delta_{g_1}\Delta_{g_2} \equiv \Delta_f \equiv -3 \pmod{\Q^{*2}}.$$
Consider the automorphism $\sigma \in \Gal(K/\Q)$ sending $\sqrt{\Delta_{g_1}} \mapsto -\sqrt{\Delta_{g_1}}$, $\sqrt{-3} \mapsto \sqrt{-3}$ and the automorphism $\tau \in \Gal(K/\Q)$ sending $\sqrt{\Delta_{g_1}} \mapsto \sqrt{\Delta_{g_1}}$, $\sqrt{-3} \mapsto -\sqrt{-3}$. It is easy to check that $\varphi(\sigma)=\pm \begin{pmatrix}
    0 & 1 \\
    2 & 0
\end{pmatrix}$ lifts to an element of order $4$ in $\GL_2(\F_3)$. As discussed in \cite[p.\,283]{ledet2001embedding}, the obstruction of the embedding problem under consideration is 
$(\Delta_{g_1},3)$. 
Now, notice that the following assertions are equivalent: \begin{itemize}
    \item[1.] The Galois embedding problem is solvable.
    \item[2.] $(\Delta_{g_1},3)=1$.
    \item[3.] $\Delta_{g_1} \equiv t^2-6t-3= (t-3)^2-3\cdot 2^2 \pmod{\Q^{*2}}$ for some $t \in \Q^*$.
    \item[4.] $\Delta_{g_1} \equiv t^2-6t-3 \pmod{\Q^{*2}}$ for infinitely many $t \in \Q^*$.
    \item[5.] $K=\Q(\sqrt{\Delta_{g_1}}\sqrt{-3})=\Q(\sqrt{t^2-6t-3},\sqrt{-3})$ for infinitely many $t \in \Q^*$. 
\end{itemize}
As described in Proposition \ref{prop_Zywina}, the splitting field of the 3-division polynomial of the elliptic curve
$$E_{t,C_2^2}\colon y^2 = x^3-3(t - 3) (t + 1) (t^2 - 6t - 3)x-2(t^2 + 3) (t^2 - 6t - 3)^2$$
is a biquadratic field, namely $\Q(\sqrt{t^2-6t-3},\sqrt{-3})$. Therefore, there are infinite many $t \in \Q^*$ such that the splitting field of the 3-division polynomial of the elliptic curve $E_{t,C_2^2}$ is $K$. This implies that we can choose one without CM.
\end{proof}
\begin{proof}[Proof of the case $G_2 \cong S_3$] First, notice that the obstruction associated to the Galois embedding problem is trivial since the sequence 
$$1 \longrightarrow \{\pm1\} \longrightarrow \widetilde{G}_2 \longrightarrow G_2 \longrightarrow 1$$ is split. We have to prove that $K$ is the splitting field of the 3-division polynomial of a non-CM elliptic curve $E$. 

As described in Proposition \ref{prop_Zywina}, the splitting field of the 3-division polynomial of the elliptic curve
$$ E_{t,S_3} \colon y^2 = x^3 -3(t+1)(t+9)x-2(t+1)(t^2-18t-27)$$
is 
$$ \psi_{3,E_{t,S_3}}(x) = 3(x-3t-3)(x^3 + 3(t + 1)x^2 + 3(t+1)(t-15)x + (t+1)(t+9)^2).$$
By means of an appropriate linear transformation on the 3-degree factor of $\psi_{3,E_{t,S_3}}$, it is easy to check that $\psi_{3,E_{t,S_3}}$ has the same splitting field as 
$$\psi_{*}(x)= x^3+A(t)x+B(t),$$
where $A(t)=-3(t+1)$ and $B(t) = (t+1)(t+2)$.

Given that $f$ has a rational root, we can consider $a,b \in \Q$ such that $f_3(x)=x^3+ax+b$ has the same as the splitting field as $f$ and $ \Delta_{f_3} \equiv \Delta_f \equiv -3\pmod{\Q^{*2}}$. We want to prove that there exists $t \in \Q^*$ such that $\psi_{*}=f_{*}$ for some Tschirnhaus transformation $f_{*}$ of $f_3$.

Let $x_1,x_2,x_3$ be the roots of $f_3$. Fix $r \in \Q$ and define the Tschirnhaus transformation
$$ f_{*}(x)=(x-y_1)(x-y_2)(x-y_3)$$
where $y_i=\frac23an + rnx_i+nx_i^2$ for some $n \in \Q$. Then $$f_{*}(x)=x^3+a_*(r)n^2x+b_*(r)n^3$$ 
where $a_*(r) = \frac13(3ar^2 - a^2 + 9br)$ and $b_*(r)=\frac{1}{27} (-18a^2r^2 + 27br^3 - 2a^3 - 27abr - 27b^2)$.
Notice that the splitting field of $f_{*}$ is the same as the splitting field of $f_3$ whenever $n \neq 0$ since $\Q \neq \Q(y_i) \subset \Q(x_i)$ implies $\Q(y_i)=\Q(x_i)$.

There exists $t_0 \in \Q^*$ such that $A(t_0)=a_*(r)n^2$ and $B(t_0)=b_*(r)n^3$ if and only if $(a_*(r)n^2,b_*(r)n^3)$ is a zero of 
$$ P(x,y) = \text{Res}_t(A(t)-x,B(t)-y) = x^2-3x+9y.$$
Given that $a_*(r) \neq 0$ for all but finitely many every $r \in \Q$ and $$81b_*(r)^2+12a_*(r)^3 \equiv 81b^2+12a^3\equiv -3\Delta_{f_3} \equiv 1 \pmod{\Q^{*2}},$$ we can check that $P(a_*(r)n(r)^2,b_*(r)n(r)^3)=0$ where $n(r) = \frac{-9b_*(r) \pm \sqrt{81b_*(r)^2+12a_*(r)^3}}{2a_*(r)^2} \in \Q$. Defining $t_0(r)=-\frac{a_*(r)n(r)^2}{3}-1$ we have that $A(t_0(r))=a_*(r)n(r)^2$. This means that $E_{t_0(r),S_3}$ is an infinite family of elliptic curves whose 3-division polynomial is a Tschirnhaus transformation of $f_3$. From this family, one can select a curve without~CM. 
\end{proof}

\begin{proof}[Proof of the case $G_3 \cong D_4$.]
For this last proof we need to consider the following lemmas:

\begin{lemma}\label{lemma_same_splitting_field}
Let $f(x)$ be a four degree polynomial with $\Gal(f) \cong D_4$ and let $\alpha_1, \alpha_2, \alpha_3$ and $\alpha_4$ be the roots of $f$. If there are $a_0,a_1,a_2,a_3 \in \Q$ such that $$f_*(x):=\prod_{i=1}^4 (x-a_0-a_1\alpha_i-a_2\alpha_i^2-a_3\alpha_i^3)=x^4+ax^2+b$$
for some $a,b \in \Q$ satisfying that $b$ and $a^2-4b$ are square-free, then the splitting field of $f$ is isomorphic to the splitting field of $f_*$.
\end{lemma}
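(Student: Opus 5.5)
The plan is to compare the fields generated by single roots and then pass to splitting fields. Write $\beta_i = a_0+a_1\alpha_i+a_2\alpha_i^2+a_3\alpha_i^3$. Each $\beta_i$ lies in $\Q(\alpha_i)$, so the splitting field $L$ of $f_*$ is contained in the splitting field $K$ of $f$. It therefore suffices to prove $[L:\Q]=8$. Equivalently, we show that $\Q(\beta_i)=\Q(\alpha_i)$ for every $i$ and that $K$ is generated by these fields.

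\textbf{Step 1: $f_*$ is irreducible.} Since $f_*(x)=x^4+ax^2+b$, its roots are $\pm\gamma,\pm\delta$ with $\gamma^2,\delta^2=\frac{-a\pm\sqrt{a^2-4b}}{2}$.
\begin{itemize}
\item Because $a^2-4b$ is square-free and not equal to $1$, it is not a rational square. Hence $\gamma^2\notin\Q$, and $f_*$ has no rational root and no quadratic factor of the form $x^2-c$ with $c\in\Q$.
\item A factorization $(x^2+ux+v)(x^2-ux+v)$ would force $v^2=b$. This contradicts $b$ being square-free, at least when $b\neq 1$; I expect the hypothesis is intended to exclude this degenerate case.
\item Hence $f_*$ is irreducible of degree $4$. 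In particular $\beta_i$ has degree $4$, so $\Q(\beta_i)=\Q(\alpha_i)$, and then $L\supseteq\Q(\alpha_1,\dots,\alpha_4)=K$.
\end{itemize}
This already gives $L=K$, so the lemma follows once irreducibility is secured.

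\textbf{Step 2: cross-check via the biquadratic structure.} As a consistency check, the Galois group of $x^4+ax^2+b$ is determined by the square classes of $b$ and $a^2-4b$. It is $V_4$ if $b$ is a square, $C_4$ if $b(a^2-4b)$ is a square, and $D_4$ otherwise. For distinct square-free $b$ and $a^2-4b$, neither $b$ nor $b(a^2-4b)$ is a square, so the group is $D_4$ and $[L:\Q]=8=[K:\Q]$. This agrees with Step 1.

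\textbf{Main obstacle.} The delicate point is the edge cases of the square-free hypothesis. If $b(a^2-4b)$ were a square, then, both factors being square-free, we would have $b=a^2-4b$. In that case the group would be $C_4$, contradicting $L\subseteq K$ with $\Gal(K/\Q)\cong D_4$ only if $f_*$ were irreducible.

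So I would handle the boundary as follows:
\begin{itemize}
\item interpret square-free as excluding $b=1$ and $a^2-4b=1$, as the intended application presumably does;
\item note that if $b=a^2-4b$, then $f_*$ is irreducible with group $C_4$, so $L$ is a degree-$4$ subfield of $K$. But $L$ contains $\Q(\beta_1)=\Q(\alpha_1)$, which is not normal over $\Q$ when the group is $D_4$. This is a contradiction, so this case cannot occur.
\end{itemize}
With these cases dispatched, Step 1 gives $L=K$.
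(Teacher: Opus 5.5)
Your Step 1 is correct and is essentially the paper's proof. Irreducibility of $f_*$ comes from $a^2-4b$ and $b$ being non-squares: you rule out the $(x^2+ux+v)(x^2-ux+v)$ factorization via $v^2=b$, while the paper uses $\Delta_{f_*}\equiv b \pmod{\Q^{*2}}$. The equality $\Q(\beta_i)=\Q(\alpha_i)$, and hence of the splitting fields, then follows in both. Step 2 and the edge-case discussion are superfluous, since Step 1 already gives $L=K$ without even using $\Gal(f)\cong D_4$.
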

\begin{proof}
Notice that $f$ is irreducible since otherwise $|\Gal(f)| \leq 6$. Let $$f_*(x)=(x-\alpha)(x+\alpha)(x-\beta)(x+\beta).$$ If $(x-\alpha)(x+\alpha)=x^2-\alpha^2 \in \Q[x]$, then $\alpha^2$ is a rational root of $x^2+ax+b$. This contradicts the assumption that $a^2-4b$ is square-free. If $(x-\alpha)(x-\beta)=x^2-(\alpha+\beta)x+\alpha\beta \in \Q[x]$ then 
$$b \equiv \Delta_{f_*} =4(\alpha+\beta)^4(\alpha \beta)^2(\alpha-\beta)^4 \equiv 1 \pmod{\Q^{*2}}.$$
This contradicts the fact that $b$ is square-free. This means that $f_*$ is also irreducible and  $$\Q(\alpha,\beta)=\Q(\alpha_1,\alpha_2,\alpha_3,\alpha_4).$$
\end{proof}

\begin{lemma}\label{lemma:quadratic_forms}
Let $Q$ and $Q'$ be binary quadratic forms over a field $k$, and assume that $-\mathrm{disc}(Q')$ is not a square in $k$. Then the quartic curve
\[
C \colon Q'(x,y)^2 + Q(x,y)=0
\]
has a non-trivial $k$-rational point if and only if $Q$ represents $-1$ over $k$.\footnote{In fact, the map
$
(x,y)\longmapsto \left(\frac{x}{Q'(x,y)},\,\frac{y}{Q'(x,y)}\right)
$
defines an involution between the non-trivial $k$-rational points of $Q'(x,y)^2+Q(x,y)=0$ and the $k$-rational points of the conic $Q(x,y)=-1$.}
\end{lemma}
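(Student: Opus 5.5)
The plan is to prove both directions with the explicit change of variables suggested in the footnote, using homogeneity: $Q$ and $Q'$ are both homogeneous of degree $2$, so $Q(\lambda x,\lambda y)=\lambda^2Q(x,y)$ and similarly for $Q'$. The only arithmetic input is the hypothesis on $\mathrm{disc}(Q')$. I read it as saying that $Q'$ is \emph{anisotropic} over $k$: $Q'(x,y)\neq 0$ for every $(x,y)\in k^2\setminus\{(0,0)\}$. The first step is to justify this reading. Writing $Q'=ax^2+bxy+cy^2$, a nontrivial zero of $Q'$ exists precisely when the binary form factors over $k$, i.e. when $b^2-4ac$ is a square in $k$. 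With the sign convention $\mathrm{disc}(Q')=4ac-b^2$, this is exactly the statement that $-\mathrm{disc}(Q')$ is a square. If $a=0$ then $Q'$ is visibly isotropic and $b^2$ is a square, so this case is consistent. I would also check that the degenerate case $b^2-4ac=0$ is excluded, which it is, since $0$ is a square.

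For the forward direction, take a nontrivial point $(x,y)$ on $C$. By anisotropy $q:=Q'(x,y)\neq 0$. Put $u=x/q$ and $v=y/q$. By homogeneity,
\[
Q(u,v)=\frac{Q(x,y)}{q^2}=\frac{-Q'(x,y)^2}{q^2}=-1,
\]
so $Q$ represents $-1$.

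For the converse, suppose $Q(u,v)=-1$. Then $(u,v)\neq(0,0)$, so $p:=Q'(u,v)\neq 0$, again by anisotropy. Set $x=u/p$ and $y=v/p$. Then $Q'(x,y)=p/p^2=1/p$ and $Q(x,y)=-1/p^2$, so $Q'(x,y)^2+Q(x,y)=0$. The point $(x,y)$ is nontrivial because $(u,v)$ is.

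It remains to check that the two maps are mutually inverse, which gives the involution in the footnote. Starting from $(x,y)$ on $C$, the image $(u,v)$ satisfies $Q'(u,v)=q/q^2=1/q$. Dividing $(u,v)$ by $1/q$ therefore returns $(x,y)$, and the symmetric computation works in the other direction. I expect no step to be a real obstacle. The only delicate point is the first step: fixing the sign convention for $\mathrm{disc}$ so that the hypothesis really yields anisotropy of $Q'$, and, if desired, remarking that in characteristic $2$ one uses the corresponding criterion for a binary form to have a nontrivial zero.
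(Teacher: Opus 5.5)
Your proof is correct and is the same as the paper's. The discriminant hypothesis makes $Q'$ anisotropic, and rescaling by $1/Q'$ (using degree-$2$ homogeneity) carries nontrivial points of $C$ to points of $Q=-1$ and back; your mutual-inverse check also justifies the footnote's involution claim, which the paper leaves implicit. One small remark: no separate characteristic-$2$ criterion is needed, since there $-\mathrm{disc}(Q')=b^2-4ac=b^2$ is always a square, so the hypothesis never holds.
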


\comentario{
\begin{lemma}\label{lemma:quadratic_forms}
Let $Q$ and $Q'$ be two dimensional quadratic forms over a field $k$ such that $-\mathrm{disc}\, Q'$ is not a square in $k$. Then $C \colon Q'^2(x,y)+Q(x,y)=0$ has a non trivial rational point if and only if $Q$ represents -1. \footnote{In fact, the function $(x,y)\mapsto (\frac{x}{Q'(x,y)},\frac{y}{Q'(x,y)})$ is an involution between non trivial rational points of $Q'(x,y)^2+Q(x,y)=0$ and rational points of $Q(x,y)=-1$.}
\end{lemma}}

\begin{proof}
If $-\mathrm{disc}\, Q'$ is not a square then $Q'$ is anisotropic. If $Q'^2(x_0,y_0)+Q(x_0,y_0)=0$ then $$Q\left(\frac{x_0}{Q'(x_0,y_0)},\frac{y_0}{Q'(x_0,y_0)}\right)=-1.$$ Conversely, if $Q(x_0,y_0)=-1$ then $$Q'\left(\frac{x_0}{Q'(x_0,y_0)},\frac{y_0}{Q'(x_0,y_0)}\right)^2+Q\left(\frac{x_0}{Q'(x_0,y_0)},\frac{y_0}{Q'(x_0,y_0)}\right)=0.$$
\end{proof}

We may assume, by applying an appropriate Tschirnhaus transformation\footnote{This transformation is given by
$
f_*(x) = (x-(\alpha_1-\alpha_3))(x-(\alpha_2-\alpha_4))(x-(\alpha_3-\alpha_1))(x-(\alpha_4-\alpha_2)),
$
where $\alpha_1,\alpha_2,\alpha_3,\alpha_4$ are the roots of a polynomial $f$ with $\Delta_f \equiv -3 \pmod{\Q^{*2}}$.},
that $f(x)=x^4+ax^2+b$ for some $a,b \in \Q$, with $b \not\equiv 1 \pmod{\Q^{*2}}$ and $a^2-4b \not\equiv 1 \pmod{\Q^{*2}}$.
Indeed, since $\Delta_f \equiv -3 \pmod{\Q^{*2}}$, we may write $b=-3d^2$ for some $d \in \Q$, so that
$f(x)=x^4+ax^2-3d^2$.


We have that $K=\Q(\sqrt{\frac{-a+\sqrt{a^2+12d^2}}{2}},\sqrt{-3})$. As discussed in \cite[p.~20]{grundman1995groups}, the obstruction to the embedding problem is $(3(a^2+12d^2),2a)$ if $a \neq 0$ and trivial if $a=0$.

Suppose that the embedding problem is solvable. We want to show that there exists an elliptic curve $E$ without CM such that the splitting field of the 3-division polynomial of $E$ is $K$. As described in Proposition \ref{prop_Zywina}, this elliptic curve may take the form (up to a quadratic twist)
$$ E_{t,D_4} \colon y^2 = x^3 -3(t+1)(t+9)x-2(t+1)(t^2-18t-27),$$
for some $t \in \Q$.

Let $x_1,x_2,x_3$ and $x_4$ be the roots of $\psi_{3,E_{t,D_4}}$. By Lemma \ref{lemma_same_splitting_field}, we can obtain a new polynomial with the same splitting field as $\psi_{3,E_{t,D_4}}$ over $\Q(t)$ via the Tschirnhaus transformation
$$ \psi_{*}(x) = \prod_{i=1}^4(x-y_i)= 9(x^4+A(t)x^2-3D(t)^2)$$
where $y_i = -\frac{t(t-12)}{8}-\frac{x_i}{12}+\frac{x_i^2}{24(t^2+12t+144)}$, $A(t) = -6(t - 12) (t + 24)$ and $D(t)=3t(t-12)$ since $$A(t)^2+12D(t)^2 \equiv t^2+12t+144 \not\equiv 1 \pmod{\Q(t)^{*2}}.$$ Let us prove that there exists $t_0 \in \Q^*$ such that the specialization of $\psi_*$ at $t=t_0$ is equal to $f_*$ for some Tschirnhaus transformation $f_*$ of $f$. 

Let $\alpha_1,\alpha_2,\alpha_3$ and $\alpha_4$ be the roots of $f$. Define the Tschirnhaus transformation of $f$ as
$$f_*(x) = \prod_{i=1}^4 (x-\beta_i)$$
where $\beta_i = n\alpha_i^3+m\alpha_i$ with $(n,m) \in \Q^2-\{0\}$. A straightforward computation shows that 
$$ f_*(x) = x^4+a_*(n,m)x^2-3d_*(n,m)^2$$ where $$a_*(n,m) = (a^3 + 9ad^2)n^2 - (2a^2 + 12d^2)mn + am^2,$$ $$d_*(n,m) = 3d^3n^2 + admn - dm^2.$$ Moreover, by Lemma \ref{lemma_same_splitting_field}, the polynomial $f_*$ has the same splitting field as $f$ since\footnote{Here we are using the fact that $a^2n^2 + 3d^2n^2 - 2amn + m^2$ has no non-trivial rational solution.} $$a_*(n,m)^2+12d_*(n,m)^2 = (a^2n^2 + 3d^2n^2 - 2amn + m^2)^2(a^2 + 12d^2) \equiv a^2+12d^2 \not\equiv 1 \pmod{\Q^{*2}}.$$

Given $(n,m) \in \Q^2$, there exists $t_0 \in \Q$ such that $A(t_0)=a_*(n,m)$ and $D(t_0)=d_*(n,m)$ if and only if $(a_*(n,m),d_*(n,m))$ is a zero of $$P(x,y) = \frac{1}{9}\mathrm{Res}_t(x-A(t),y-D(t)) = (x+2y)^2 - 1728(x + 6y).$$
Hence, it suffices to show that the curve $$C_{a,d} \colon (a_*(n,m)+2d_*(n,m))^2-1728(a_*(n,m)+6d_*(n,m))=0$$
has a non-trivial rational point.

We can apply Lemma \ref{lemma:quadratic_forms} to
$$Q_{a,d}(n,m) = -1728(a_*(n,m)+6d_*(n,m))$$
and
$$Q'_{a,d}(n,m) = a_*(n,m)+2d_*(n,m),$$
since $-\mathrm{disc}\, (a_*+2d_*) = 4d^2(a^2+12d^2) \equiv  a^2+12d^2 \not\equiv 1 \pmod{\Q^{*2}}$. As a result we get that $C_{a,d}$ has a non-trivial rational point if and only if $-1728(a_*+6d_*)$ represents $-1$ or, equivalently, $1728(a_*+6d_*)$ represents 1. 

\noindent If $a^3 + 9ad^2 + 18d^3 \neq 0$, then $1728(a_*+6d_*)$ represents $1$ if and only if\footnote{Recall that a quadratic form $Q(x,y)=Ax^2+By^2+Cxy$ with $B\neq 0$ represents 1 if and only if $(-\mathrm{disc} \, Q,B)=1$.} $$(-\mathrm{disc}\, (1728(a_*+6d_*))), 1728(a^3 + 9ad^2 + 18d^3)) = 1.$$ But
\begin{align*}
    (-\mathrm{disc}\, (1728(a_*+6d_*)))&, 1728(a^3 + 9ad^2 + 18d^3))= \\ 
    &= (3(a^2+12d^2), 3(a^3 + 9ad^2 + 18d^3)) \\ &= (3(a^2+12d^2), 2a) \\ &=1,
\end{align*}
where the second equality follows from $(3(a^2+12d^2), 2a)(3(a^2+12d^2), 3(a^3 + 9ad^2 + 18d^3))=(3(a^2+12d^2), 6a(a^3 + 9ad^2 + 18d^3))=1$ since
$$3(a^2+12d^2)(a-3d)^2 + 6a(a^3 + 9ad^2 + 18d^3) = (3(a^2 - ad + 6d^2))^2.$$
If $a^3 + 9ad^2 + 18d^3 = 0$, then $a-6d \neq 0$ and $1728(a_*+6d_*)$ represents $1$ if and only if $$(-\mathrm{disc}\, (1728(a_*+6d_*)), 1728(a-6d)) = 1.$$ But
\begin{align*}
    (-\mathrm{disc}\, (1728(a_*+6d_*))&, 1728(a-6d))= \\ 
    &= (3(a^2+12d^2), 3(a-6d)) \\ &= (3(a^2+12d^2), 2a) \\ &=1,
\end{align*}
where the second equality follows from $((3(a^2+12d^2), 6a(a-6d)))=1$ since $$3(a^2+12d^2) + 6a(a-6d) = (3a-6d)^2.$$
If $a=0$, then the existence of a non-trivial rational point on $C_{a,d}$ follows from $$-\mathrm{disc}\, 1728(a_*+6d_*) \equiv 36d^2 \equiv 1 \pmod{\Q^{*2}}.$$
Finally, given that the set of rational points $C_{a,d}(\Q)$ is in a bijection with the set of rational points of a conic, we conclude that $C_{a,d}(\Q)$ is infinite. For each $(n,m) \in C_{a,d}(\Q)$ we can find $t \in \Q$ such that $A(t)=-6(t-12)(t+24)=a_*(n,m)$ and $D(t)=3t(t-12)=d_*(n,m)$. We get $$t=\frac{1728-a_*(n,m)-2d_*(n,m)}{144}.$$ 

Suppose that $(a_*+2d_*)(C_{a,d}(\Q))$ is a finite set. Then $(a_*+6d_*)(C_{a,d}(\Q))$ is also finite. This implies that $a_*(C_{a,d}(\Q))$ and $d_*(C_{a,d}(\Q))$ are both finite. But
$$ 3(a^2+12d^2)^2d^4n^4+(a^2+12d^2)(aa_*-12dd_*)d^2n^2+(ad_*+a_*d)^2=0$$
and $$ 3(a^2+12d^2)^2d^4m^4+(a^3a_* + 9aa_*d^2 + 36d_*d^3)(a^2 + 12d^2)d^2m^2-(a^3d_* + 9ad_*d^2 - 3a_*d^3)^2=0$$
hold for every $(n,m) \in\Q^2$. Hence, for each $C_1,C_2 \in \Q$, the conics $a_*(n,m)=C_1$ and $d_*(n,m)=C_2$ intersect\footnote{This is essentially Bézout's Theorem.} in finitely many $(n,m) \in \Q^2$. This contradicts that $C_{a,d}(\Q)$ is infinite.
Thus, $(a_*+2d_*)(C_{a,d}(\Q))$ is infinite and we can choose $t$ such that $E_t/\Q$ ($j(E_t)=t^3$) does not have CM.
\end{proof}

\begin{corollary}
The proof of Theorem~\ref{main_theorem} also shows that, whenever any of the considered embedding problems is solvable, there exist infinitely many elliptic curves (with pairwise distinct $j$-invariants) whose mod~$3$ Galois representations provide weak solutions. Moreover, the proof yields an explicit method for constructing such elliptic curves.
\end{corollary}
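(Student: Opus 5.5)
The corollary is essentially a bookkeeping statement, so I would prove it by re-reading the proof of Theorem~\ref{main_theorem} case by case. In each case I would extract a rational one-parameter family of elliptic curves whose $3$-division fields equal $K$, and check that this family has infinitely many distinct $j$-invariants. Part c) of Theorem~\ref{main_theorem} then shows that each member's mod $3$ representation, in the basis $\{P_1,P_2\}$, is a weak solution. Explicitness comes for free, since every step of the proof produces formulas.

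The cases go as follows.

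For $G_0$, the family is $E_{t,C_2}$ with $t\in\Q^*$. Its $j$-invariant is $F_1(F_2(t))$, a nonconstant rational function of $t$, so each value of $j$ is attained by only finitely many $t$. Hence infinitely many distinct $j$'s occur, and only finitely many of them are CM $j$-invariants.

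For $G_1$, I would use the equivalences 3)$\Leftrightarrow$4) from that proof. Solvability gives infinitely many $t$ with $t^2-6t-3\equiv\Delta_{g_1}$ modulo squares; these come from the rational points of a conic with a rational point. The curves are $E_{t,C_2^2}$, and $j=G_1(G_2(t))$ is nonconstant, so again there are infinitely many distinct $j$'s.

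For $G_2$, the parameter $r$ gives $t_0(r)$, and I must check that $r\mapsto t_0(r)$ is nonconstant. Here $t_0(r)=-a_*(r)n(r)^2/3-1$, so constancy would force $a_*(r)n(r)^2$ to be constant. Together with $P=0$, this would force $b_*(r)n(r)^3$ to be constant as well, which pins down $(a_*n^2,b_*n^3)$. I would rule this out by noting that the invariant $b_*^2/a_*^3$ is a nonconstant function of $r$ for generic $(a,b)$; it is the $j$-type invariant of a varying Tschirnhaus transform. If that fails for special $(a,b)$, I would apply a preliminary Tschirnhaus change to $f_3$.

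For $G_3$, the proof already establishes that $(a_*+2d_*)(C_{a,d}(\Q))$ is infinite, hence infinitely many $t$ and infinitely many $j=t^3$.

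In every case the valid parameter set is infinite and $j$ is a finite-to-one function of it. So the set of $j$-invariants is infinite, and removing the thirteen CM $j$-invariants still leaves infinitely many. For the surjective case $G_4$, I would invoke the construction of \cite{LR}, which is likewise parametrized by the rational points of a conic, and argue in the same way.

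The main obstacle is the $G_2$ case. There, the proof only stated ``an infinite family'' without verifying that $t_0(r)$ actually varies with $r$. I would settle this by the invariant $b_*(r)^2/a_*(r)^3$ argument, or directly by computing the degree of $t_0$ as a rational function of $r$.
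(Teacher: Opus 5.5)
Your proposal is correct and follows the paper's own route. The paper gives no separate argument for the corollary; it relies on the case-by-case constructions in the proof of Theorem~\ref{main_theorem} (and on \cite{LR} for $G_4$), together with part c) for the weak-solution property, exactly as you do. Your check that $t_0(r)$ is nonconstant in the $G_2$ case fills a point the paper only asserts, and no fallback is needed there: the homogenization of $a_*(r)$ is a binary quadratic form whose discriminant is a nonzero multiple of $81b^2+12a^3=-3\Delta_{f_3}\neq 0$, so $a_*^3$ has two distinct triple roots and cannot be proportional to the nonzero square $b_*^2$, hence $b_*^2/a_*^3$ is nonconstant for every admissible $(a,b)$.
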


\comentario{
\begin{corollary}
The proof of Theorem \ref{main_theorem} also shows that, in case that any of the embedding problems considered is solvable, there exists infinitely many elliptic curves (with distinct $j$-invariant) whose mod 3 Galois representation provides a weak solution. In fact, we have described an explicit method to find these elliptic curves.
\end{corollary}
}

\section{Examples}\label{sec_examples}
We conclude this article with several examples illustrating Theorem \ref{main_theorem}. In each solvable example, we find an infinite family of elliptic curves solving the problem by the method described. 

\textit{Case $G_0 \cong C_2$.} Let $f(x)=x^2(x^2+3)$ with $\Delta_f \equiv -3 \pmod{\Q^{*2}}$. The splitting field of $f$ is $K=\Q(\sqrt{-3})$ and its Galois group is isomorphic to $G_0 \cong C_2$. Each non-CM elliptic curve from the family $\{E_{t,C_2}, t \in \Q^*\}$ described in Proposition \ref{prop_Zywina} has 3-division polynomial with splitting field $\Q(\sqrt{-3})$. 

\textit{Case $G_1 \cong C_2^2$ (obstructed).} Let $f(x)=(x^2-2)(x^2+6)$ with $\Delta_f \equiv -3 \pmod{\Q^{*2}}$. The splitting field of $f$ is $\Q(\sqrt{2},\sqrt{-6})$, and its Galois group is isomorphic to $G_1 \cong C_2^2$. Since $(2,3)=-1$, the Galois embedding problem is obstructed. Thus, by Theorem \ref{main_theorem}, there exists no non-CM elliptic curve $E/\Q$ whose 3-division polynomial has splitting field $\Q(\sqrt{2},\sqrt{-6})$.

\textit{Case $G_1 \cong C_2^2$ (unobstructed).} Let $f(x)=(x^2+2)(x^2-6)$ with $\Delta_f \equiv -3 \pmod{\Q^{*2}}$. The splitting field of $f$ is $K=\Q(\sqrt{-2},\sqrt{6})$, and its Galois group is isomorphic to $G_1 \cong C_2^2$. Since $(-2,3)=1$, the Galois embedding problem is unobstructed. Thus, by Theorem \ref{main_theorem} there exists a non-CM elliptic curve $E/\Q$ such that the splitting field of the 3-division polynomial of $E$ is $K$. Indeed, we can explicitly find infinitely many such elliptic curves by computing $t \in \Q$ satisfying
$$-2 \equiv t^2-6t-3 \pmod{\Q^{*2}}.$$
A straightforward computation shows that 
$$t(r) = \frac{r^2-8r+10}{r^2+2}$$
with $r \in \Q$ is a parametrization. This means that the 3-division polynomial of every non-CM elliptic curve $E$ with $j(E)=27\frac{(t(r)-3)^3(t(r)+1)^3}{t(r)^3}$ has splitting field $\Q(\sqrt{-2},\sqrt{6})$.

\textit{Case $G_2 \cong S_3$.} Let $f(x)=x(x^3+2)$ with $\Delta_f \equiv -3 \pmod{\Q^{*2}}$. The splitting field of $f$ is $K=\Q(\sqrt[3]{2},\sqrt{-3})$, and its Galois group is isomorphic to $G_2 \cong S_3$. The obstruction to the embedding problem is trivial. Thus, by Theorem \ref{main_theorem} there exists a non-CM elliptic curve $E/\Q$ such that the splitting field of the 3-division polynomial of $E$ is $K$. Indeed, we can explicitly find infinitely many such elliptic curves by computing 
$$ t(r)=-\frac{-a_*(r)n(r)^2}{3}-1=-\frac{2+r^3}{r^3}.$$
This means that the 3-division polynomial of every non-CM elliptic curve $E$ with $j(E)=27\frac{(t(r)+1)(t(r)+9)^3}{t(r)^3}$ has splitting field $\Q(\sqrt[3]{2},\sqrt{-3})$.

\textit{Case $G_3 \cong D_4$ (obstructed).} Let $f(x)=x^4+x^2-3=x^4+ax^2-3d^2$ with $\Delta_f \equiv -3 \pmod{\Q^{*2}}$. The splitting field of $f$ is $K=\Q(\sqrt{-\frac{1}{2}+\frac{1}{2}\sqrt{13}},\sqrt{-3})$, and its Galois group is isomorphic to $G_3 \cong D_4$. The obstruction to the embedding problem is $$(3(a^2+12d^2),2a)=(39,2)=-1.$$
Thus, by Theorem \ref{main_theorem}, there exists no non-CM elliptic curve $E/\Q$ whose 3-division polynomial has splitting field $\Q(\sqrt{-\frac{1}{2}+\frac{1}{2}\sqrt{13}},\sqrt{-3})$.

\textit{Case $G_3 \cong D_4$ (unobstructed).} Finally, let $f(x) = x^4+2x^2-12=x^4+ax^2-3d^2$ with $\Delta_f \equiv -3 \pmod{\Q^{*2}}$.
This polynomial has splitting field $K=\Q(\sqrt{-1+\sqrt{13}},\sqrt{-3})$ and Galois group isomorphic to $G_3 \cong D_4$. The obstruction to the embedding problem is $$ (3 \cdot 52, 4) =(156,1)=1.$$
Thus, by Theorem \ref{main_theorem} there exists a non-CM elliptic curve $E/\Q$ such that the splitting field of the 3-division polynomial of $E$ is $K$. Indeed, we can explicitly find infinitely many such elliptic curves by computing the rational points of 
$$C_{2,2} \colon (a_*(n,m)+2d_*(n,m))^2-1728(a_*(n,m)+6d_*(n,m))=0,$$
where 
$$a_*(n,m) = 80n^2-56nm+2m^2,$$
$$d_*(n,m) = 24n^2+4nm-2m^2.$$
Following the proof of the theorem, this can be done by first computing the rational points of the conic $1728(a_*(n,m)+6d_*(n,m))=1$. As a result we get that
$$n(r) = \frac{-6(5r^2 + 40r + 176)(5r^2 + 16r - 112)}{23r^4 - 144r^3 - 2592r^2 - 14080r - 2304},$$
$$m(r) = \frac{-24(5r^2 + 16r - 112)(r^2 + 56r + 112)}{23r^4 - 144r^3 - 2592r^2 - 14080r - 2304}$$ parametrize the rational points of $C_{2,2}$. For every $r \in \Q$ we can find $t(r) \in \Q$ such that $A(t(r))=-6(t(r)-12)(t(r)+24)=a_*(n(r),m(r))$ and $D(t(r))=3t(r)(t(r)-12)=d_*(n(r),m(r))$. Solving these quadratic equations yields
$$t(r) = \frac{1728-a_*(n(r),m(r))-2d_*(n(r),m(r))}{144} = \frac{-12(27r^4 + 464r^3 + 864r^2 + 6912r + 27392)}{23r^4 - 144r^3 - 2592r^2 - 14080r - 2304}.$$
This means that the 3-division polynomial of every non-CM elliptic curve $E$ with $j(E)=t(r)^3$ has splitting field $\Q(\sqrt{-1+\sqrt{13}},\sqrt{-3})$.

\bibliography{main}{}

@article{Zywina,
author = {Zywina, David},
year = {2015},
month = {08},
pages = {},
title = {On the possible images of the mod $\ell$ representations associated to elliptic curves over~$\bold{Q}$},
doi = {10.48550/arXiv.1508.07660},
journal={DOI: 10.48550/arXiv.1508.07660},
}

@article {GL,
    AUTHOR = {Gonz\'alez, Josep and Lario, Joan-C.},
     TITLE = {Rational and elliptic parametrizations of {$\bold Q$}-curves},
   JOURNAL = {J. Number Theory},
  FJOURNAL = {Journal of Number Theory},
    VOLUME = {72},
      YEAR = {1998},
    NUMBER = {1},
     PAGES = {13--31},
     }

@article{LR,
  title={An octahedral-elliptic type equality in {$Br_2(k)$}},
  author={Lario, J-C and Rio, Anna},
  journal={Comptes rendus de l'Acad{\'e}mie des sciences. S{\'e}rie 1, Math{\'e}matique},
  volume={321},
  number={1},
  pages={39--44},
  year={1995}
}

@article{ledet2001embedding,
  title={Embedding problems and equivalence of quadratic forms},
  author={Ledet, Arne},
  journal={Mathematica Scandinavica},
  pages={279--302},
  year={2001},
  publisher={JSTOR}
}

@article{grundman1995groups,
  title={Groups of order 16 as Galois groups},
  author={Grundman, Helen G and Smith, Tara L and Swallow, John R},
  journal={Expo. Math},
  volume={13},
  pages={289--319},
  year={1995}
}

@article{bandini2016fields,
  title={Fields generated by torsion points of elliptic curves},
  author={Bandini, Andrea and Paladino, Laura},
  journal={Journal of Number Theory},
  volume={169},
  pages={103--133},
  year={2016},
  publisher={Elsevier}
}
\bibliographystyle{alpha}

\end{document}